\documentclass[12pt]{amsart}
\usepackage{amsfonts}
\usepackage{enumerate}
\usepackage{helvet,courier}
\usepackage{mathptmx,amsmath}
\usepackage{type1cm}
\DeclareMathAlphabet{\mathcal}{OMS}{cmsy}{m}{n}
\DeclareMathAlphabet{\mathbbold}{U}{bbold}{m}{n}  
\usepackage{amsthm}
\usepackage{graphicx}       
\usepackage{multicol}
\usepackage{mathrsfs}
\usepackage{amssymb}
\usepackage{verbatim}

\usepackage[usenames,dvipsnames]{xcolor}

\theoremstyle{plain}
\newtheorem{thm}{Theorem}
\newtheorem{theorem}[thm]{Theorem}
\newtheorem{lm}[thm]{Lemma}
\newtheorem{lemma}[thm]{Lemma}
\newtheorem{cor}[thm]{Corollary}
\newtheorem{prop}[thm]{Proposition}

\newcounter{question}
\newcommand{\qt}{%
        \stepcounter{question}%
        \thequestion}
\newcommand{\bq}{\fbox{Q\qt}\ }

\newcounter{typo}

\theoremstyle{remark}
\newtheorem{rmk}{Remark}

\theoremstyle{definition}

\newcommand{\bnu}{\begin{enumerate}}
\newcommand{\enu}{\end{enumerate}}
\newcommand{\bpf}{\begin{proof}}
\newcommand{\epf}{\end{proof}}

\newcommand{\q}{\quad}
\newcommand{\qq}{\qquad}


\newcommand{\sset}{\subset}

\newcommand{\eset}{\emptyset}

\newcommand{\al}{\alpha}
\newcommand{\be}{\beta}
\newcommand{\ga}{\gamma}

\newcommand{\om}{\omega}
\newcommand{\Om}{\Omega}

\newcommand{\la}{\lambda}

\newcommand{\z}{\zeta}
\newcommand{\ep}{\epsilon}

\newcommand{\sg}{\sigma}
\newcommand{\tht}{\theta}
\newcommand{\Tht}{\Theta}

\newcommand{\vp}{\varphi}
\newcommand{\de}{\delta}
\newcommand{\De}{\Delta}

\newcommand{\bbz}{\mathbb{Z}}

\newcommand{\bbr}{\mathbb{R}}

\newcommand{\bfr}{\mathbf{R}}
\newcommand{\intrn}{\int_{\bfr^n}}

\newcommand{\lt}{L^2}

\newcommand{\lo}{L^{1}}
\newcommand{\lp}{L^p}
\newcommand{\lnf}{L^{\nf}}



\newcommand{\rar}{\rightarrow}

\newcommand{\f}{\frac}

\newcommand{\p}{\partial}
\newcommand{\nf}{\infty}

\newcommand{\tf}{\tfrac}
\newcommand{\wh}{\widehat}



\allowdisplaybreaks

\makeindex         

\begin{document}

\title{Rough Bilinear Singular Integrals }

\author{Loukas Grafakos, Danqing He,   Petr Honz{\' \i}k }
\thanks{The first author was supported by the Simons Foundation. The third author  
 was supported by the ERC CZ grant LL1203 of the Czech Ministry of Education}
\thanks{2010 Mathematics Classification Number 42B20, 42B99}
\date{}
\maketitle

\begin{abstract}
We study the rough bilinear singular integral, introduced by Coifman and Meyer \cite{CM2}, 
$$
T_\Om(f,g)(x)=\textup{p.v.} \! \int_{\bbr^{n}}\! \int_{\bbr^{n}}\! |(y,z)|^{-2n} \Om((y,z)/|(y,z)|)f(x-y)g(x-z)  dydz,
$$
when $\Om $ is a function in $L^q(\mathbb S^{2n-1})$ with vanishing integral and $2\le q\le \nf$. When 
$q=\infty$ we obtain 
boundedness for $T_\Om$ from $L^{p_1}(\bbr^n)\times L^{p_2}(\bbr^n)$ to $ L^p(\bbr^n) $
when  $1<p_1,  p_2<\nf$ and $1/p=1/p_1+1/p_2$. 
For $q=2$ we obtain that $T_\Om$ is bounded from $L^{2}(\bbr^n)\times L^{ 2}(\bbr^n)$ to $ L^1(\bbr^n) $. 
For   $q$ between $2$ and infinity we obtain the analogous boundedness 
  on a   set of indices   around the point $(1/2,1/2,1)$.  To obtain our results we introduce a new bilinear technique based on 
tensor-type  wavelet decompositions. 
\end{abstract}

\tableofcontents


\section{Introduction}
Singular integral theory was initiated in the seminal work of   Calder\'on and Zygmund      \cite{CZ1}.  
The study of boundedness of rough singular integrals of convolution type has been an active area of research since the middle of the twentieth century.    Calder\'on and Zygmund  \cite{CZ2}
first studied the rough singular integral 
$$
L_\Om(f)(x) = \textup{p.v.} \int_{\mathbb R^n} \f{\Om(y/|y|)}{|y|^n} f(x-y) \, dy
$$
where $\Om$ is in $L\log L(\mathbb S^{n-1})$ with  mean value zero and showed that $L_\Om$ is bounded on $L^p(\mathbb R^n)$ 
for $1<p<\nf$.  The same conclusion under the less restrictive condition that $\Om$ lies in $H^1(\mathbb S^{n-1})$ 
was obtained by Coifman and   Weiss \cite{CW} and Connett \cite{connett}. 
The weak type $(1,1)$ boundedness of $L_\Om$ when   $n=2$ was established  by
Christ and Rubio de Francia \cite{CR}
 and independently by 
Hofmann \cite{hofmann}. 
 (In unpublished work, Christ and Rubio de Francia extended this result to all dimensions $n\le 7$.)
The weak type $(1,1)$ property of 
$L_\Om$ was proved   by Seeger \cite{seeger} in all dimensions and was
later extended by Tao \cite{tao-ind}
to situations in which there is   no Fourier transform structure. Several questions remain   concerning 
the endpoint behavior of $L_\Om$, such as if  the condition $\Om \in L\log L(\mathbb S^{n-1})$ can be relaxed to 
$\Om \in H^1(\mathbb S^{n-1})$, or merely $\Om \in L^1(\mathbb S^{n-1})$ when $\Om$ is an odd function. 
 On the former there is a partial result of Stefanov \cite{stefanov} but not much is still known about the latter.

The bilinear counterpart of the rough singular integral linear theory is notably more intricate. To fix notation, we 
fix $1< q\le \infty$ and we let
  $\Om$ in $L^{q}(\mathbb S^{2n-1})$ with $\int_{\mathbb S^{2n-1}}\Om\,  d\sg=0$, where
$\mathbb S^{2n-1}$ is the unit sphere in $\mathbb R^{2n}$. 
Coifman and Meyer \cite{CM2}  introduced the bilinear singular integral operator  associated with $\Om$ by 
\begin{equation}\label{Op}
T_\Om(f,g)(x)=\textup{p.v.} \int_{\bbr^{n}} \int_{\bbr^{n}} K(x-y,x-z)f(y)g(z)\, dydz,
\end{equation}
where  $f,  g$ are functions in the Schwartz class $\mathcal S(\bbr^n)$, 
$$
K(y,z)=\Om((y,z)')/|(y,z)|^{2n}\, , 
$$
and $x'=x/|x|$ for $x\in\bbr^{2n}$. General facts about bilinear operators can be found in  \cite[Chapter 13]{Meyer3},    \cite[Chapter 7]{Gra14m}, and \cite{MuSc}. 
If $\Om$ possesses some smoothness, i.e. if is   a function of bounded 
variation on the circle, Coifman and Meyer \cite[Theorem I]{CM2}  showed that $T_\Om$ is bounded 
from $L^{p_1}(\mathbb R)\times L^{p_2}(\mathbb R)$ to $L^p(\mathbb R)$ when $1<p_1,p_2,p<\nf$ and $1/p=1/p_1+1/p_2$. In higher dimensions,  it was   shown Grafakos and Torres \cite{Gra-To},  via a    bilinear $T1$ condition, 
that if $\Om$ a Lipschitz function on $\mathbb S^{2n-1}$,  then $T_\Om$ is bounded 
from $L^{p_1}(\mathbb R^n)\times L^{p_2}(\mathbb R^n)$ to $L^p(\mathbb R^n)$ when $1<p_1,p_2 <\nf$, $1/2<p<\nf$, and $1/p=1/p_1+1/p_2$.
  But if  $\Om$ 
is rough, the situation is significantly more complicated, and the boundedness of $T_\Om$ remained unresolved until this work, except when in situations when it   reduces to the uniform boundedness of bilinear Hilbert transforms. 
If $\Om$ is merely integrable function on $\mathbb S^1$, but is 
  odd, the operator $T_\Om$ is intimately connected with the 
celebrated (directional) bilinear Hilbert transform
$$
\mathcal H_{  \theta_1,\theta_2} (f_1, f_2)(x)=
\int_{-\nf}^{+\nf} f_1(x-t\theta_1) f_2(x-t\theta_2)\, \f{dt}{t}
$$  
(in the direction $ (\theta_1,\theta_2)$), via the relationship 
$$
T_\Om (f_1,  f_2)(x)=   \f12 \int_{  \mathbb S^{2n-1}}
\Om(\theta_1,   \theta_2)  \mathcal H_{  \theta_1,\theta_2} (f_1, f_2)(x)\, 
d( \theta_1,\theta_2) \, . 
$$
The boundedness of $\mathcal H_{  \theta_1,\theta_2}$ was proved by Lacey and Thiele \cite{LT1}, \cite{LT2} 
while the more relevant, for this problem, uniform in $\theta_1,\theta_2$ boundedness of 
$\mathcal H_{  \theta_1,\theta_2}$ was addressed by Thiele \cite{thiele}, Grafakos and Li \cite{grafakos-li}, and Li \cite{li}.  
Exploiting the uniform boundedness of $\mathcal H_{  \theta_1,\theta_2}$, Diestel, Grafakos, Honz\'\i k, Si, and Terwilleger \cite{DGHST} showed that if $n=2$ and the 
even part of $\Om$ lies in $H^1(\mathbb S^1)$,   then $T_\Om$ is bounded from $L^{p_1}(\bbr) \times L^{p_2}(\bbr)$
to $L^p(\bbr)$ when $1<p_1,p_2,p<\nf$, \linebreak $1/p=1/p_1+1/p_2$, and the triple $(1/p_1,1/p_2,1/p)$ lies in the 
open hexagon  described by the conditions:
\begin{equation*} 
\Big| \f{1}{p_1}-\f{1}{p_2}\Big| <\f12 \, ,\qq 
\Big| \f{1}{p_1}-\f{1}{p'}\Big| <\f12\,  ,\qq 
\Big| \f{1}{p_2}-\f{1}{p'}\Big| <\f12\,  .
\end{equation*}
This is exactly the region in which the uniform boundedness of the bilinear Hilbert transforms is currently known. 
It is noteworthy to point out the $T_\Om$  reduces itself to a bilinear Hilbert transform $\mathcal H_{  \theta_1,\theta_2}$, 
  if $\Om$  is  the sum of the pointmasses $\de_{(\tht_1,\tht_2)}+\de_{-(\tht_1,\tht_2)}$ on $\mathbb S^1$.

In this work we provide a proof of    the   
boundedness of $T_\Om$  on $L^p$ for all indices with $p>1/2$ and for all dimensions. 
This breakthrough   is a consequence  of  
the novel  technical  ingredients we employ in this context. We 
build on the work of   Duoandikoetxea and Rubio de Francia \cite{DR}
but our key idea is to decompose the multiplier in terms of a 
  tensor-type  compactly-supported wavelet decomposition  and to use 
  combinatorial arguments to group the different pieces together,  exploiting orthogonality.

The main result of this paper is   the following theorem.
\begin{theorem}\label{Main}
For all $n\ge 1$, if $\Om\in L^{\nf}(\mathbb S^{2n-1})$, then for $T_\Om$ defined in \eqref{Op}, we have
\begin{equation}\label{ppk}
\|T_\Om\|_{L^{p_1}(\bbr^n)\times L^{p_2}(\bbr^n)\rar L^p(\bbr^n)}< \infty
\end{equation}
whenever $1<p_1,\, p_2<\nf$ and $1/p=1/p_1+1/p_2$.
\end{theorem}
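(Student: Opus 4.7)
The plan is to combine the classical Duoandikoetxea--Rubio de Francia dyadic kernel decomposition with the tensor-type wavelet analysis announced in the introduction. First I would fix a smooth radial partition of unity $\{\beta_j\}_{j\in\bbz}$ on $\bbr^{2n}\setminus\{0\}$ with $\beta_j$ supported where $|(y,z)|\sim 2^j$, and write $K=\sum_j K_j$ with $K_j=K\beta_j$. Since $K$ is homogeneous of degree $-2n$ and $\Om$ has mean zero, the bilinear operators $T_j:=T_{K_j}$ are isotropic dilates of $T_0$, and the cancellation of $\Om$ forces the multiplier $m_j:=\wh{K_j}$ to vanish to suitable order at the origin, while $\|m_j\|_{L^\nf}\ls\|\Om\|_{L^\nf}$ uniformly in $j$ by the assumption $\Om\in L^\nf$.

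Next I would expand each $m_j$ in a tensor-product basis of compactly supported wavelets on $\bbr^n\times\bbr^n$. Writing $\psi_I$ for a Daubechies-type wavelet adapted to a dyadic cube $I\subset\bbr^n$, the collection $\{\psi_I\otimes\psi_J\}$ is orthonormal in $L^2(\bbr^{2n})$, so
\begin{equation*}
m_j(\xi,\eta)=\sum_{I,J} c^j_{I,J}\,\psi_I(\xi)\psi_J(\eta),\qquad c^j_{I,J}=\langle m_j,\psi_I\otimes\psi_J\rangle.
\end{equation*}
The crucial algebraic observation is that for a rank-one symbol the bilinear multiplier factors pointwise as a product of two linear multipliers,
\begin{equation*}
T_{\psi_I\otimes\psi_J}(f,g)(x)=(T_{\psi_I}f)(x)\,(T_{\psi_J}g)(x),
\end{equation*}
so that H\"older's inequality reduces its $L^{p_1}\times L^{p_2}\to L^p$ norm to the product of two linear multiplier norms, which are controllable by standard Littlewood--Paley theory since each $\psi_I$ is a nice frequency bump.

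I would then sum over the wavelet parameters by grouping by scale: pieces $\psi_I\otimes\psi_J$ with $|I|,|J|$ fixed can be aggregated by pairing the linear factors with vector-valued square-function estimates whose orthogonality yields summability in the pair $(I,J)$ at a fixed scale, while the smoothness and vanishing moments of the wavelets paired against $m_j$ yield enough decay of the coefficients $c^j_{I,J}$ to sum across scales and across $j$. This should produce the central $L^2\times L^2\to L^1$ bound; easy H\"older bounds on the individual $T_j$'s give off-diagonal $L^{p_1}\times L^{p_2}$ estimates that can be combined with the central bound via bilinear interpolation to cover every $(1/p_1,1/p_2)$ in the open square $(0,1)\times(0,1)$, establishing \eqref{ppk}.

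The main obstacle is the combinatorial bookkeeping in the wavelet sum. Because $\Om$ is only in $L^\nf$, the multiplier $m_j$ has limited smoothness and the coefficients $c^j_{I,J}$ decay only modestly, so the naive bound obtained by putting absolute values inside the wavelet sum diverges. The proof must therefore exploit the orthogonality of $\{\psi_I\otimes\psi_J\}$ \emph{through} the bilinear H\"older pairing, which a priori destroys orthogonality. The key technical step --- and the novelty claimed in the introduction --- is to organize the doubly-indexed wavelet sum into groups on which orthogonality survives or can be recovered (for example by fixing the scale difference between $I$ and $J$ and decoupling the two frequency variables), so that the final bound is summable rather than merely finite piece by piece.
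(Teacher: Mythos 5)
There is a genuine gap, and it sits at the very first step: your decomposition $K_j=K\beta_j$ is the purely spatial dyadic truncation, and for it no decay in $j$ is possible. Since $K$ is homogeneous of degree $-2n$, one has $K_j(y,z)=2^{-2jn}K_0(2^{-j}y,2^{-j}z)$, hence $T_j(f,g)(x)=T_0\big(f(2^j\cdot),g(2^j\cdot)\big)(2^{-j}x)$, and for any H\"older triple $1/p=1/p_1+1/p_2$ this scaling is exactly critical: $\|T_j\|_{L^{p_1}\times L^{p_2}\to L^p}=\|T_0\|_{L^{p_1}\times L^{p_2}\to L^p}$ for every $j$. So wavelet coefficient estimates for $m_j=\wh{K_j}$ cannot produce summable operator bounds; at best they re-prove a uniform bound, and $\sum_j$ of equal norms diverges. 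In the linear theory this is resolved by Littlewood--Paley orthogonality across $j$ after Plancherel; the bilinear analogue of that orthogonality step is precisely what is missing here, and your sketch offers no substitute ("decay of the coefficients $c^j_{I,J}$ ... to sum across $j$" is not available, because $m_j$ at scale $2^j$ is neither smooth uniformly in $j$ nor frequency-localized away from the regions where the other pieces live). Relatedly, the remark that $\|m_j\|_{L^\nf}\lesssim\|\Om\|_{L^\nf}$ uniformly cannot carry the argument: a good $L^\nf$ bound on a bilinear multiplier does not imply boundedness, which is exactly the obstruction the paper is designed to circumvent.

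The paper's route differs at this point in an essential way: it performs a double decomposition, $K^i=\beta_iK$ and $K^i_j=\Delta_{j-i}K^i$, and regroups $K_j=\sum_iK^i_j$, so that $j$ measures the mismatch between spatial scale and frequency scale rather than the spatial scale itself. For these regrouped pieces the inverse Fourier transform of $m_{j,0}$ lives at unit scale, so $m_{j,0}$ is smooth with bounds $\lesssim 2^{-\de j}$, and the whole technical core (the wavelet expansion with its $\ell^2$ versus $\ell^\nf$ coefficient bounds, the splitting into diagonal and off-diagonal parts, and the combinatorial level-set argument) is used to prove genuine exponential decay $\|T_j\|_{L^2\times L^2\to L^1}\lesssim 2^{-\de j}$. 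Away from the central point the bounds are not "easy H\"older estimates": one must show each regrouped $K_j$ is a bilinear Calder\'on--Zygmund kernel with constant only $O(2^{j\ep})$ and invoke the bilinear $T1$/Calder\'on--Zygmund theorem, and only then does interpolation between the decaying central bound and the slowly growing off-center bounds give decay at every interior $(1/p_1,1/p_2)$ and allow the sum over $j$. Your rank-one factorization of tensor wavelet symbols and the intention to recover orthogonality by grouping are in the spirit of the paper's diagonal analysis, but without the regrouped decomposition there is nothing to sum, so the plan as stated cannot be completed.
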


In the remaining sections we focus on the proof of this result while in the last section we focus on extensions to the 
case where $\Om $ lies in $L^q(\mathbb S^{2n-1})$ for $q<\nf$. 

Some remarks about our notation in this paper: For $1<q<\nf$ we set $q'=   q/(q-1)$  and for $q=\nf$, we set  
$\nf'=1$. 
We denote the 
the norm of a bounded  bilinear operator $T$ from $X\times Y$ to $Z$ by
$$
\|T\|_{X\times Y\to Z} = \sup_{\|f\|_{X}\le 1} 
\sup_{\|g\|_{Y}\le 1} \|T(f,g)\|_Z\, .
$$
This notation was already used in   \eqref{ppk}.  If $x_1,x_2$ are in  $\mathbb R^{ n} $, then we denote the point   $
 (x_1,x_2)$ in $\mathbb R^{ 2n} $ by $\vec x$. We denote   the set of positive integers by  $\mathbb N$ and we set
$\mathbb N_0=\mathbb N\cup\{0\}$. In the sequel, multiindices in $\mathbb Z^{2n}$ are elements of $\mathbb N_0^{2n}$. 
Finally, we adhere to the standard convention to denote by $C$ a constant 
that depends only on inessential parameters of the problem. 

\section{Estimates of Fourier Transforms of the Kernels}

Let us fix a $q$ satisfying $1<q\le \nf$ and a function 
$\Om \in L^q(\mathbb S^{n-1})$ with mean value zero. 
We fix a smooth function $\alpha$ in $\mathbb R^+$ such that $\alpha(t)=1$ for $t\in (0,1]$, $0<\alpha(t)<1$ for $t\in(1,2)$ and $\alpha(t)=0$ for $t\geq 2.$
  For $(y,z)\in \mathbb R^{2n}$ and $j \in \mathbb Z$ we introduce the function  
$$
\beta_j(y,z)=\alpha(2^{-j}|(y,z)|)-\alpha(2^{-j+1}|(y,z)|).
$$		
We write $\be=\be_0$  and we note that this is a function supported in $[1/2,2]$.  
We denote $\Delta_j$ the Littlewood-Paley operator 
$\Delta_j f= \mathcal F^{-1} (\beta_j \wh f\,).$ Here and throughout this paper $\mathcal F^{-1} $ denotes the inverse Fourier 
transform, which is defined via $\mathcal F^{-1} (g)(x) = \int_{\mathbb R^n} g(\xi) e^{2\pi i x\cdot \xi}d\xi = \wh{g}(-x)$, 
where $\wh{g}$ is the Fourier transform of $g$. 
  We decompose the kernel $K$ as follows: we denote $K^i=\beta_i K$ and we set
$ K^i_j= \Delta_{j-i} K^i$ for $i,j\in \mathbb Z$. Then we write
$$
K=\sum_{j=-\nf}^{\nf} K_j,
$$
 where 
 $$
 K_j=\sum_{i=-\infty}^\infty K_j^i .
 $$
  We also denote $m_j=\widehat{K_j}.$

Then the operator can be written as 
$$
T_\Om(f,g)(x)=\sum_j\int_{\mathbb R^n} \int_{\mathbb R^n}  K_j(x-y,x-z)f(y)g(z)\,  dydz=:\sum_jT_{j}(f,g)(x).
$$

We have the following lemma whose proof is known  (see for instance \cite{Duo}) and is omitted.

\begin{lm}\label{K0size}
Given $\Om \in L^q(\mathbb S^{2n-1})$, $0< \de <1/q'$ and
$\vec \xi = (\xi_1,\xi_2 ) \in \mathbb R^{2n}$ we have we have
$$
|\wh {K^0}(\vec \xi \,  )|\le C\|\Om\|_{L^{q}}\min (| \vec \xi \,  |,|\vec \xi \, |^{-\de})
$$
and for all  multiindices $\al$ in $\mathbb Z^{2n}$ with $\al \neq 0$ we have 
$$
|\p^{\al}\wh{K^0}(\vec \xi \,)|\le C_\al \|\Om\|_{L^q}\min(1,| \vec \xi \,  |^{-\de})\, .
$$
\end{lm}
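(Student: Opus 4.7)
The plan is to derive both bounds by passing to polar coordinates in $\mathbb R^{2n}$ and exploiting the vanishing mean value of $\Omega$ against the radial cutoff $\beta_0$. A direct polar computation, together with H\"older on the sphere, gives $\|K^0\|_{L^1(\mathbb R^{2n})} \lesssim \|\Omega\|_{L^q}$ since $\beta_0$ is supported in $\{1/2\le r\le 2\}$. Moreover, because $\beta_0$ is radial and $\int_{\mathbb S^{2n-1}} \Omega\, d\sigma = 0$, one has $\int K^0 = 0$. Writing
$$
\widehat{K^0}(\vec\xi) = \int_{\mathbb R^{2n}} K^0(\vec x)\bigl(e^{-2\pi i\vec\xi\cdot\vec x} - 1\bigr)\, d\vec x
$$
and using $|e^{-2\pi i\vec\xi\cdot\vec x}-1|\lesssim |\vec\xi||\vec x|\lesssim |\vec\xi|$ on $\mathrm{supp}\,K^0$ immediately yields $|\widehat{K^0}(\vec\xi)|\lesssim|\vec\xi|\,\|\Omega\|_{L^q}$, which is the needed bound for $|\vec\xi|\le 1$.

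For the decay bound, the plan is to pass to polar coordinates and decouple the angular and radial parts:
$$
\widehat{K^0}(\vec\xi) = \int_{\mathbb S^{2n-1}} \Omega(\theta)\, I(\vec\xi\cdot\theta)\, d\sigma(\theta), \qquad I(\eta):=\int_0^\infty \beta_0(r)\,r^{-1}\, e^{-2\pi i r\eta}\, dr.
$$
Since $\beta_0(r)r^{-1}$ is a fixed smooth function of compact support in $(0,\infty)$, one integration by parts gives $|I(\eta)|\lesssim \min(1,|\eta|^{-1})$, and interpolation with the trivial bound yields $|I(\eta)|\lesssim \min(1,|\eta|^{-\delta})$ for any $\delta\in[0,1]$. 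H\"older's inequality in $\theta$ then produces
$$
|\widehat{K^0}(\vec\xi)| \le \|\Omega\|_{L^q}\Bigl(\int_{\mathbb S^{2n-1}} \min(1,|\vec\xi\cdot\theta|^{-\delta q'})\, d\sigma(\theta)\Bigr)^{1/q'},
$$
and the standard sphere estimate $\int_{\mathbb S^{2n-1}}|e\cdot\theta|^{-s}\,d\sigma<\infty$ for unit $e$ whenever $s<1$ produces the factor $|\vec\xi|^{-\delta}$, provided $\delta q'<1$.

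For the derivative estimates, the plan is to observe that $\partial^\alpha \widehat{K^0}$ is the Fourier transform of $(-2\pi i\vec x)^\alpha K^0(\vec x)$, which is supported in the same annulus and whose $L^1$ norm is still bounded by $C_\alpha\|\Omega\|_{L^q}$; this yields the bound by $C_\alpha\|\Omega\|_{L^q}$. For the decay, the polar-coordinate argument of the previous paragraph applies verbatim after writing $\vec x^\alpha = r^{|\alpha|}\theta^\alpha$ and replacing $I$ by $\int_0^\infty \beta_0(r)r^{|\alpha|-1}e^{-2\pi i r\eta}\,dr$, which satisfies the same $\min(1,|\eta|^{-1})$ estimate by the same integration by parts; the sphere integrability step is unchanged. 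The extra factor $\vec x^\alpha$ destroys the vanishing-moment property for $\alpha\neq 0$, which is why the improved bound $|\vec\xi|$ near the origin is replaced by a constant. The essentially delicate point, which dictates the range $0<\delta<1/q'$, is the sphere integrability threshold $s<1$; pushing past this threshold fails, and this is the only place in the argument where the $L^q$ norm of $\Omega$ genuinely enters rather than merely $L^1$.
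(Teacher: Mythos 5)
Your argument is correct and follows essentially the same route as the standard proof the paper refers to (and sketches in its source): the cancellation of $\int K^0$ for the $|\vec\xi\,|$ bound near the origin, polar coordinates to isolate the radial oscillatory integral $I(\vec\xi\cdot\theta)$, one integration by parts plus interpolation to get $\min(1,|\eta|^{-\delta})$, and H\"older on the sphere with the integrability of $|e\cdot\theta|^{-\delta q'}$ forcing $\delta<1/q'$, with the loss of cancellation for $\alpha\neq 0$ explaining the weaker bound near the origin. No gaps to report.
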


The following proposition is a consequence of the  preceding lemma.
\begin{prop}\label{Good}   
Let $1\le p_1,p_2<\nf$ and define $p$ via  
$1/p=1/p_1+1/p_2$. 
Let $\Om\in L^{q}(\mathbb S^{2n-1})$,  $1<q\le \infty$, $0<\de<1/q'$, and for $j\in \mathbb Z $
consider  the bilinear  operator  
$$
T_j(f,g)(x)=\int_{\bbr^{2n}} K_j(x-y,x-z)f(y)g(z)dydz \, .
$$
If both $p_1,p_2>1$, then $T_j$ is bounded 
 from $L^{p_1}(\mathbb R^n) \times L^{p_2}(\mathbb R^n)$ to $L^p(\mathbb R^n)$    
with norm at most    $C\,\|\Om\|_{L^q}\,2^{(2n-\de) j}$ if $j\ge 0$ and 
at most    $C\,\|\Om\|_{L^q}\,2^{-|j|(1-\de)}$ if $j< 0$. 
If at least one of $p_1$ and $p_2$ is equal to $1$, then $T_j$  maps
$L^{p_1}(\mathbb R^n) \times L^{p_2}(\mathbb R^n)$ to $L^{p,\nf}(\mathbb R^n)$ with a similar norm.   
\end{prop}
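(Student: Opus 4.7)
My plan is to reduce the proposition to a pointwise estimate of the form
$|T_j(f,g)(x)|\le C\|\Omega\|_{L^q}\,B_j\,(Mf)(x)(Mg)(x),$
where $M$ is the Hardy--Littlewood maximal operator and $B_j$ equals the claimed operator-norm bound, i.e.\ $B_j = 2^{(2n-\delta)j}$ for $j\ge 0$ and $B_j = 2^{-|j|(1-\delta)}$ for $j<0$. Once this is in hand, H\"older's inequality and the $L^{p_i}$-boundedness of $M$ for $p_i>1$ immediately yield the strong-type bound, while if $p_1=1$ or $p_2=1$ one substitutes the weak $(1,1)$-bound of $M$ and uses H\"older's inequality in Lorentz spaces to obtain the weak $L^{p,\infty}$ estimate.

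To produce the pointwise bound I will first exploit Lemma \ref{K0size} on the Fourier side. After the substitution $\ell=j-i$ one has $m_j(\vec\xi) = \sum_{\ell}\beta_\ell(\vec\xi)\widehat{K^0}(2^{j-\ell}\vec\xi)$; on the support of $\beta_\ell$, $|\vec\xi|\approx 2^\ell$ and hence $|2^{j-\ell}\vec\xi|\approx 2^j$, so Lemma \ref{K0size} gives $|m_j(\vec\xi)|\le C\|\Omega\|_{L^q}\min(2^j,2^{-j\delta})$ together with, via the Leibniz rule, derivative bounds $|\partial^\alpha m_j(\vec\xi)|\le C_\alpha\|\Omega\|_{L^q}|\vec\xi|^{-|\alpha|}\Lambda_j(\alpha)$ with an explicit $j$- and $\alpha$-dependent prefactor. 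Integration by parts converts these Fourier-side estimates into a pointwise bound on the inverse Fourier transform of each frequency-localized piece, giving a bump-type estimate on $K_j^{(\ell)}$ at spatial scale roughly $2^{j-\ell}$. The tensor-factorization $(1+|(u,v)|)^{-N}\lesssim(1+|u|)^{-N/2}(1+|v|)^{-N/2}$ then lets one dominate the bilinear convolution of $f,g$ against $K_j^{(\ell)}$ pointwise by a constant multiple of $\Lambda_j\,Mf(x)Mg(x)$.

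The hardest step will be to organize the sum over $\ell$ so as to obtain the correct $j$-factor, since a naive triangle inequality over the pieces $K_j^{(\ell)}$ diverges. I would exploit the disjoint frequency supports of the $\beta_\ell$ (Littlewood--Paley orthogonality) and restrict attention to those $\ell$ for which $\widehat{K^0}(2^{j-\ell}\vec\xi)$ is non-negligible. For $j<0$, combining the cancellation-based estimate $|\widehat{K^0}(\eta)|\le C|\eta|$ for small $|\eta|$ with the decay $|\eta|^{-\delta}$ for large $|\eta|$ produces the constant $2^{-|j|(1-\delta)}$ in place of the cruder $2^j$; for $j\ge 0$, the effective volume factor $2^{2nj}$ coming from the range of relevant $\ell$ combined with the pointwise multiplier bound $2^{-j\delta}$ yields the claimed $2^{(2n-\delta)j}$.
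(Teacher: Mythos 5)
Your Fourier-side estimates are sound and essentially reproduce the Mikhlin-type bounds $|\partial^\alpha m_j(\vec\xi\,)| \le C\|\Omega\|_{L^q} Q(j) |\vec\xi\,|^{-|\alpha|}$ that the paper derives. The gap is in the reduction from those bounds to a pointwise estimate $|T_j(f,g)(x)| \le C\|\Omega\|_{L^q}\, B_j\, Mf(x)\,Mg(x)$, which is not true for this multiplier. The symbol $m_j$ is homogeneous of degree zero up to the localizing cutoffs, so it has no decay as $|\vec\xi\,|\to\infty$, and the kernel $K_j$ decays only like $|\vec x\,|^{-2n}$ at infinity: it is a genuine bilinear Calder\'on--Zygmund kernel, not an $L^1$ bump. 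A pointwise bound by $Mf\cdot Mg$ would in particular force $T_j$ to map $L^\infty\times L^\infty \to L^\infty$ (since $M$ is bounded on $L^\infty$), which already fails for linear Mikhlin multipliers (they land only in $BMO$), let alone bilinear ones. Equivalently, while each single Littlewood--Paley piece $K_j^{(\ell)}$ does admit an $L^1$ bump estimate and therefore a maximal-function bound, the constants $c_\ell$ are uniformly bounded below, so $\sum_\ell c_\ell$ diverges; and your proposed fix of ``restricting to those $\ell$ for which $\widehat{K^0}(2^{j-\ell}\vec\xi\,)$ is non-negligible'' cannot cut the sum, because on $\mathrm{supp}\,\beta_\ell$ one always has $|2^{j-\ell}\vec\xi\,|\approx 2^j$ regardless of $\ell$, so every $\ell$ contributes comparably.

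What is actually needed to recombine the pieces is cancellation, i.e.\ a bilinear Calder\'on--Zygmund theorem, and that is exactly what the paper does: after checking the Mikhlin-type conditions for $|\alpha|\le 2n$, it invokes the Coifman--Meyer multiplier theorem in the form extended by Kenig--Stein and Grafakos--Torres (Theorem 7.5.3 in \cite{Gra14m}). That theorem both gives the strong bounds for $p_1,p_2>1$, including $p<1$, and the weak-type endpoint when some $p_i=1$. If you wish to keep the spirit of a more elementary argument, you could instead verify directly that $K_j$ is a bilinear CZ kernel with constant $Q(j)$ (the paper's Lemma~\ref{CZPP} carries out precisely such a kernel-side computation for $q=\infty$) and combine this with an $L^2\times L^2\to L^1$ bound, but the CZ machinery cannot be replaced by a pointwise maximal estimate.
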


\begin{proof}
We   prove the assertion by showing that  the multiplier $ \sigma_j = \wh K_j $ associated  with $T_j$ 
satisfies the conditions of the 
  Coifman-Meyer multiplier theorem \cite{CM}, which  was extended to the case 
$p<1$ by  Kenig and Stein \cite{KS}   and by   Grafakos and Torres \cite{Gra-To}.   
To be able to use this theorem, we need to show that $\sigma_j$ is a $C^{\nf}$ function on $\bbr^{2n}\backslash\{0\})$   
that satisfies 
$$
|\p^{\al}\sg_j( \vec \xi\, )|\le C\,\,Q(j) \|\Om\|_{L^q} | \vec \xi  |^{-|\al|}
$$
for all multiindices $\al$ in $\mathbb Z^{2n}$ with $|\al|\le 2n$ and all $ \vec \xi  \in \mathbb R^{2n}\setminus \{0\}$,
where $Q(j)= 2^{(2n-\de)j}$ if  $j\ge 0$ and  $Q(j)= 2^{-|j|(1-\de)}$ if  $j< 0$. 
Then we   may use   Theorem 7.5.3  in \cite{Gra14m} 
 to deduce the claimed boundedness. 
It is not hard to verify that 
\begin{equation}\label{665}
\sg_j( \vec \xi\,)
= \sum_{i=-\nf}^{\nf}\be(2^{i-j}| \vec \xi\,|)\wh{K^0}(2^i \vec \xi\, )
\end{equation}
If $| \vec \xi\, |\approx 2^l$, then since $\be $  is supported in $[1/2,2]$, 
 $2^i $ must be comparable to $2^{j-l}$ in \eqref{665}. Using Lemma \ref{K0size} we have the 
estimate
$$
|\sg_j( \vec \xi\,)|\le\sum_{i \in F}|\wh{K^0}(2^i \vec \xi\,)|
\le C \|\Om\|_{L^q} \sum_{i \in F } \min\big\{ 2^i| \vec \xi  |, ( 2^i| \vec \xi  |)^{-\de}\big\}
\le C \|\Om\|_{L^q}I(j), 
$$
where $F$ is a finite set of $i$'s near $j-l$ and $I(j)=2^{-|j|\de}$ if $j\ge 0$ whereas $I(j)=2^{-|j| }$ if $j<0$. 
For an $\al$th derivative  of $\sg_j$ with  $1\le |\al|\le 2n$, using 
 that $|\p^{\al}\wh{K^0}( \vec \xi\, )|\le C_{\al}\,\|\Om\|_{L^q} | \vec \xi |^{-\de}$, we obtain
\begin{eqnarray*}
\sum_{i\in F}|\p^{\al}(\wh{K^0}(2^i \vec \xi\, )\Phi(2^i \vec \xi\, ))|
& \le&  \|\Om\|_{L^q}\sum_{i\in F}C_{\al}2^{i|\al|}(2^i| \vec \xi  |)^{-\de}  \\
& \le & C \|\Om\|_{L^q}2^{j(|\al|-\de)}|\vec \xi |^{-|\al|}
\end{eqnarray*}
and this is at most 
$C\,\|\Om\|_{L^q}\,2^{(2n-\de)j}$ if $j\ge 0$ and 
at most     $C\,\|\Om\|_{L^q}\,2^{-|j|(1-\de)}$ if $j< 0$, since 
 $\de \in (0,1/q')$.
\end{proof}

The   operators $T_j$  associated with the multipliers $\wh{K_j}$ are bounded with bounds that grow in $j$ since the smoothness of the symbol is getting worse with $j$. We certainly  have that  
$$
      \|\widehat K_j\|_{L^\infty}\leq C2^{-|j|\de},            
$$
but there is no good estimate available for the derivatives of $\widehat K_j$,  
and moreover, a good $L^\nf$ estimate for the multiplier  does not suffice to yield boundedness in the bilinear setting. The key argument of this article is to circumvent this obstacle and prove that the norms of the operators $T_j$ indeed decay  exponentially. Our proof is new in this context and is based  on a suitable  wavelet expansion   combined with 
combinatorial arguments.

\section{Boundedness: a good point}

In this section we prove the following result which is  a special case of Theorem \ref{Main}:
\begin{theorem}\label{MM}
Suppose $\Om\in L^{q}(\mathbb S^{2n-1})$ with  $2\le q\le \nf $, then for $f,g$ in $L^2(\bbr^n)$ we have
$$
\|T_\Om(f,g)\|_{L^1(\bbr^n)}\leq C\|\Omega\|_{L^q} \|f\|_{L^2(\bbr^n)}\|g\|_{L^2(\bbr^n)}.
$$
\end{theorem}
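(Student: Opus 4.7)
The plan is to prove that for some $\eta>0$,
$$\|T_j\|_{L^2\times L^2\to L^1}\le C\|\Om\|_{L^q}\,2^{-\eta|j|},$$
uniformly in $j\in\mathbb Z$, then sum the geometric series over $j$. Proposition \ref{Good} already yields $2^{-|j|(1-\delta)}$ when $j<0$ (with $0<\delta<1/q'\le 1/2$, available since $q\ge 2$), so the entire task is to handle $j\ge 0$, where the bound from Proposition \ref{Good} is useless.

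For $j\ge 0$, I would begin from the representation \eqref{665} and decompose $m_j=\sum_l m_j^{(l)}$, where $m_j^{(l)}(\vec\xi)=\beta(2^{-l}|\vec\xi|)\,\wh{K^0}(2^{j-l}\vec\xi)$ is supported in the shell $|\vec\xi|\approx 2^l$ of $\mathbb R^{2n}$. Lemma \ref{K0size} gives $|m_j^{(l)}|\lesssim \|\Om\|_{L^q}2^{-j\delta}$, and by the chain rule $|\partial^\alpha m_j^{(l)}|\lesssim \|\Om\|_{L^q}2^{-j\delta}2^{(j-l)|\alpha|}$ for $|\alpha|\ge 1$. These bounds encode that $m_j^{(l)}$, although supported on a shell of thickness $\sim 2^l$, actually oscillates at the finer scale $2^{l-j}$.

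Next, I would expand each $m_j^{(l)}(\xi_1,\xi_2)$ in an $L^2$-orthonormal tensor-product basis of compactly supported (Daubechies-type) wavelets $\{\psi_I\otimes\psi_J\}$ on $\mathbb R^n_{\xi_1}\times\mathbb R^n_{\xi_2}$, with scale $\sim 2^{l-j}$ in each factor:
$$m_j^{(l)}(\xi_1,\xi_2)=\sum_{I,J}c_{I,J}^{\,j,l}\,\psi_I(\xi_1)\,\psi_J(\xi_2).$$
The derivative estimates translate, via integration by parts, into decay of the coefficients $c_{I,J}^{\,j,l}$. Each atom contributes a tensor-product bilinear building block $c_{I,J}^{\,j,l}(\psi_I(D)f)(\psi_J(D)g)$; pointwise Cauchy-Schwarz in the tile index $(I,J)$ followed by H\"older in $x$, combined with the almost-orthogonality of $\{\psi_I(D)f\}$ and $\{\psi_J(D)g\}$ in $L^2$, controls the single-annulus contribution by $\|m_j^{(l)}\|_{L^2}\|f\|_2\|g\|_2$.

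The main obstacle is now visible: one has $\|m_j^{(l)}\|_{L^2}\lesssim \|\Om\|_{L^q}2^{-j\delta}2^{nl}$, which grows with $l$ because the annulus volume beats the pointwise decay, so term-by-term summation in $l$ diverges. The resolution—the paper's new idea—is a combinatorial regrouping of the wavelet atoms across different annuli $l$ according to their physical location in frequency in $\mathbb R^{2n}$, so that the linear Littlewood-Paley pieces $\psi_I(D)f$ and $\psi_J(D)g$ coming from different $l$'s share disjoint (or boundedly overlapping) frequency supports and can be combined via Plancherel rather than summed additively. Coupling this reorganization with the coefficient decay should yield the desired factor $2^{-\eta j}$ after summing over all wavelet tiles and all $l$. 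Making both the bilinear wavelet orthogonality and the linear Littlewood-Paley orthogonality cooperate with bounded multiplicity is where the real work lies.
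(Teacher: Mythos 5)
Your global strategy does coincide with the paper's (Proposition \ref{Good} handles $j<0$; the task is a bound $C\|\Om\|_{L^q}2^{-\eta j}$ for $T_j$, $j\ge 0$; one works with dyadic dilates of a unit-annulus multiplier and expands it in compactly supported tensor-product wavelets). But there is a genuine gap exactly at the step that is supposed to produce the decay in $j$. Your claim that Cauchy--Schwarz over the tiles $(I,J)$ plus the two square functions controls the single-annulus contribution by $\|m_j^{(l)}\|_{L^2}\|f\|_{L^2}\|g\|_{L^2}$ is not correct as stated: with $L^2$-normalized wavelets at scale $t=2^{l-j}$ each square function costs a factor $t^{-n/2}$, so this argument gives $t^{-n}\|m_j^{(l)}\|_{L^2}\|f\|_{L^2}\|g\|_{L^2}\approx \|\Om\|_{L^2}\|f\|_{L^2}\|g\|_{L^2}$ --- a bound uniform in $l$ (as it must be, by scale invariance), but with \emph{no} decay in $j$ at all. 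The underlying obstruction is that the $\ell^2$ norm of the wavelet coefficients of the rescaled piece $m_{j,0}$ is comparable to $\|m_{j,0}\|_{L^2}\le C\|\Om\|_{L^2}$ by \eqref{del99}, hence carries no smallness in $j$; only the $\ell^\infty$ norm of the coefficients is small (of order $2^{-\delta j-(M+1)\lambda}$, Lemma \ref{wavesize}), while the number of nonzero coefficients can be as large as about $2^{2nj}$, so neither pure $\ell^2$ (Cauchy--Schwarz) nor pure $\ell^1$ (triangle inequality) summation sees any decay. Converting the $\ell^\infty$ smallness into operator-norm decay is the actual crux of Proposition \ref{D}, and your sketch contains no mechanism for it.

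Relatedly, you locate the combinatorics in the wrong place. The paper's combinatorial argument is not a regrouping of atoms \emph{across} different annuli $l$; summing the dilates is the comparatively routine part (for the diagonal part, bounded overlap of the sets $E_{j,k}$ costs only a factor $j$, absorbed by $2^{-\delta j/8}$; for the off-diagonal parts one uses an output-frequency Littlewood--Paley square function together with the polynomial-subtraction argument of Lemma \ref{SInL}). The new idea operates \emph{within} the single piece $m_{j,0}$: after splitting the wavelets into a diagonal family (supported where $2^{-j}|\xi_1|\le|\xi_2|\le 2^j|\xi_1|$) and two off-diagonal families, the coefficients of the diagonal family are stratified into level sets $U_r$, each split into row-heavy, column-heavy and sparse parts $U_r^1,U_r^2,U_r^3$; the heavy parts are estimated using disjointness of supports of the one-variable factors together with a cardinality bound obtained by playing $\|b\|_{\ell^2}$ against $\|b\|_{\ell^\infty}$, and the sparse part by decomposing it into at most $2^{(r+\delta j+M\lambda)/2}$ one-to-one subfamilies. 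The off-diagonal families require a separate argument (a maximal-function bound in the small-frequency variable and $L^2$ in the other). None of this --- in particular, no way of trading the $\ell^\infty$ coefficient bound against a count of contributing tiles --- appears in your proposal, so as written it stalls at a $j$-uniform estimate, which cannot be summed over $j$.
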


In view of Proposition \ref{Good}, Theorem \ref{MM} will be a
consequence of the following proposition. 

\begin{prop}\label{D}
Given $2\le q\le \nf$ 
and   $0 <\de<1/(8q')$, 
then for any   $j\ge 0$, the operator $T_j$ 
associated with the kernel $K_j$ maps $L^2(\mathbb R^n)\times L^2(\mathbb R^n) $ to $ L^{1}(\mathbb R^n)$ with norm at most 
$C\|\Om\|_{L^q}2^{-\de j}$.
\end{prop}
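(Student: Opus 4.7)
The plan is to replace the missing derivative estimates on $m_j=\widehat{K_j}$ by the single available pointwise bound $\|m_j\|_{L^\infty}\lesssim \|\Om\|_{L^q}2^{-\de j}$, deploying a compactly supported tensor wavelet decomposition on the Fourier side and then undoing it by orthogonality in $L^2$. The extra factor $1/8$ in the range of $\de$ reflects losses that I expect to accumulate across the four ingredients: the trivial bound on $\widehat{K^0}$, the expansion of each $\widehat{K_j^i}$ on a dyadic annulus, the coefficient estimates, and the $L^\infty$-to-$L^2$ wavelet normalization.

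First I would choose a pair $(\Psi^F,\Psi^M)$ of compactly supported wavelets on $\mathbb R^n$ of arbitrarily high regularity with many vanishing moments, and form the tensor wavelet basis $\{\Psi_{I_1}\otimes\Psi_{I_2}\}$ of $L^2(\mathbb R^{2n})$ indexed by pairs of dyadic cubes $I_1,I_2\subset\mathbb R^n$ (with the appropriate mother/father convention at each scale). Expand
\[
m_j(\xi_1,\xi_2)=\sum_{I_1,I_2} a^j_{I_1,I_2}\,\Psi_{I_1}(\xi_1)\Psi_{I_2}(\xi_2),\qquad a^j_{I_1,I_2}=\langle m_j,\Psi_{I_1}\otimes\Psi_{I_2}\rangle.
\]
On the spatial side this yields the clean factorization
\[
T_j(f,g)(x)=\sum_{I_1,I_2}a^j_{I_1,I_2}\,(\check\Psi_{I_1}*f)(x)\,(\check\Psi_{I_2}*g)(x),
\]
and Cauchy--Schwarz in $x$ gives the master bound
\[
\|T_j(f,g)\|_{L^1}\le \sum_{I_1,I_2}|a^j_{I_1,I_2}|\,\|\check\Psi_{I_1}*f\|_{L^2}\,\|\check\Psi_{I_2}*g\|_{L^2}.
\]

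Second, I would estimate the coefficients $a^j_{I_1,I_2}$ by decomposing $m_j=\sum_i\widehat{K^i_j}$, so that each piece is Fourier-supported where $|\vec\xi|\approx 2^{j-i}$. On this annulus Lemma~\ref{K0size} gives the uniform bound $|\widehat{K^i_j}(\vec\xi)|\lesssim \|\Om\|_{L^q}2^{-\de j}$. For pairs $(I_1,I_2)$ whose scales are very far from $2^{j-i}$, the vanishing moments of $\Psi_{I_1}\otimes\Psi_{I_2}$ combined with the $C^\infty$ cutoff $\beta_{j-i}$ present in $\widehat{K^i_j}$ force rapid decay of the wavelet coefficients; for the remaining pairs where the scales match, I would use the $L^\infty$ bound together with the $L^1$ norm of the wavelet to extract a coefficient estimate of the form $|a^j_{I_1,I_2}|\lesssim \|\Om\|_{L^q}2^{-\de j}\,\Theta(I_1,I_2,i)$ where $\Theta$ is a function with good summability along a narrow set of ``allowed'' scale pairs.

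Third, I would carry out the double sum by freezing the scale pair $(k_1,k_2)$ and summing over translations. For fixed scales the pieces $(\check\Psi_{I_1}*f)_{I_1\text{ at scale }k_1}$ are essentially a Littlewood--Paley decomposition, so almost-orthogonality in $L^2$ gives
\[
\Big(\sum_{I_1\text{ at scale }k_1}\|\check\Psi_{I_1}*f\|_{L^2}^2\Big)^{1/2}\lesssim \|\Delta^*_{k_1}f\|_{L^2},
\]
and likewise for $g$. Applying Cauchy--Schwarz over translations at a fixed scale pair, summing over $k_1,k_2$ against the coefficient bounds, and then summing in $k_1,k_2$ with the usual Littlewood--Paley square function gives $\|f\|_{L^2}\|g\|_{L^2}$ times the geometric factor $2^{-\de j}$.

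The hard part will be the third step: the kernel $K^i_j$ is supported along the diagonal direction inherited from the sphere $\mathbb S^{2n-1}$, so the scale pairs $(k_1,k_2)$ for which $a^j_{I_1,I_2}$ is not negligible form a diagonal band whose width depends on the interplay between $i$ and $j$. A naive sum over this band loses a factor that exceeds $2^{\de j}$ and destroys the decay. The combinatorial grouping is needed precisely to reorganize the translations and scales so that the near-orthogonality of the Littlewood--Paley pieces is fully exploited, and so that the gain $2^{-\de j}$ survives the triple summation over $i$, scale pairs, and translations. This is where I expect the restriction $q\ge 2$ and $\de<1/(8q')$ to be consumed, via H\"older's inequality when passing from $L^\infty$ bounds on $\widehat{K^0}$ to $L^2$ estimates on the wavelet coefficients.
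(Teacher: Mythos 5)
Your framework matches the paper's: tensor-product compactly supported wavelets applied to the symbol on the Fourier side, the two coefficient bounds (an $\ell^\infty$ bound of size $\|\Om\|_{L^q}2^{-\de j}2^{-(M+1+n)\la}$ from vanishing moments via Lemma~\ref{K0size}, and an $\ell^2$ bound $\|\Om\|_{L^2}$ from Plancherel), and a rescaling step to pass from $m_{j,0}$ to the full $m_j=\sum_i m_{j,0}(2^i\cdot)$. The master bound
$$\|T_j(f,g)\|_{L^1}\le\sum_{I_1,I_2}|a^j_{I_1,I_2}|\,\|\check\Psi_{I_1}*f\|_{L^2}\,\|\check\Psi_{I_2}*g\|_{L^2}$$
is also exactly what the paper works with. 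So far, so good.

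The gap is precisely at the place you flag as ``the hard part'': your third step does not close, and the reason is quantitative, not merely combinatorially awkward. Fix a scale $\la$ and write $u_{I_1}=\|\check\Psi_{I_1}*f\|_{L^2}$, $v_{I_2}=\|\check\Psi_{I_2}*g\|_{L^2}$. By almost-orthogonality $\|u\|_{\ell^2}\lesssim\|f\|_{L^2}$ and $\|v\|_{\ell^2}\lesssim\|g\|_{L^2}$, but then the double sum $\sum_{I_1,I_2}|a^j_{I_1,I_2}|u_{I_1}v_{I_2}$ with Cauchy--Schwarz over translations gives at best $\|a^j\|_{\ell^2}\|u\|_{\ell^2}\|v\|_{\ell^2}\lesssim\|\Om\|_{L^2}\|f\|_{L^2}\|g\|_{L^2}$ with \emph{no} decay in $j$, while the $\ell^\infty$ bound on $a^j$ requires $\ell^1$ control of $u,v$ which $L^2$ does not give. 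Pointwise interpolation of the two coefficient bounds does not change the structure of the sum, so neither route alone yields $2^{-\de j}$. What is actually needed --- and what the paper supplies --- is a genuine combinatorial mechanism: slice the translation pairs $(k,l)$ into level sets $U_r$ by coefficient magnitude, then split $U_r$ into ``heavy-row'' pairs $U_r^1$, ``heavy-column'' pairs $U_r^2$, and the low-degree remainder $U_r^3$. For $U_r^1$ the rows are few and one uses disjointness in the second variable to reach $L^\infty$; for $U_r^3$ the low degree means it decomposes into $O(2^{(r+\de j+M\la)/2})$ matchings on which both factors are $\ell^2$-orthogonal. Only this three-way split converts the $\ell^2$/$\ell^\infty$ information into the required decay; your proposal replaces it with a paraphrase of the difficulty rather than a resolution.

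A second omission: you do not split the wavelets by position relative to the diagonal $|\xi_1|\approx|\xi_2|$. The paper handles the diagonal part by the combinatorial argument sketched above, but the off-diagonal pieces ($D^2,D^3$, where one of $|\xi_1|,|\xi_2|$ is tiny compared to the other) are treated by a separate argument: there the summation over dilations $k$ is controlled by observing that $m_{j,0}^2(2^k\cdot)$ is supported where $|\xi_1+\xi_2|\approx 2^{j-k}$, so the dilates are Littlewood--Paley--separated, and the estimate reduces to a maximal function bound on $g$ plus a multiplier bound on $f$, together with a polynomial-subtraction argument to identify the modulo-polynomials $L^1$ limit with the honest $L^1$ function. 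Your single uniform scheme does not capture this, and the sum over dilations along the diagonal versus off-diagonal is handled quite differently in the paper (a $\sqrt{j}\cdot\sqrt{j}$ loss from Cauchy--Schwarz over $k$ in the diagonal case, absorbed by the exponential gain; a clean square-function bound with no loss in the off-diagonal case).
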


To obtain the proof of the proposition, we utilize wavelets with compact support. 
Their existence is due to Daubechies \cite{Dau} and can also be found  in Meyer's book \cite{Meyer1}. 
For our purposes we need
product type smooth wavelets with compact supports; the  construction  of  
such objects    can be found in Triebel~\cite{Tr1}.   

\begin{lm}
For any $k\in \mathbb N$ there are real compactly supported functions $\psi_F,\psi_M\in C^k(\mathbb R)$
such that,  if $\psi^G$ is defined by 
$$
\Psi^{G}(\vec x\,)=\psi_{G_1}(x_1)\cdots \psi_{G_{2n}}(x_{2n}) 
$$
for   $G=(G_1,\dots, G_{2n})$ in the set     
$$
 \mathcal I :=\Big\{ (G_1,\dots, G_{2n}):\,\, G_i \in \{F,M\}\Big\}  \, , 
$$
then the  family of 
functions 
$$
\bigcup_{\vec \mu \in  \mathbb Z^{2n}}\bigg[  \Big\{   \Psi^{(F,\dots, F)} (\vec  x-\vec \mu  )  \Big\} \cup \bigcup_{\la=0}^\nf
\Big\{  2^{\la n}\Psi^{G} (2^{\la}\vec x-\vec \mu):\,\, G\in \mathcal I\setminus \{(F,\dots , F)\}  \Big\}  
  \bigg]
$$
forms an orthonormal basis of $L^2(\mathbb R^{2n})$, where $\vec x= (x_1, \dots , x_{2n})$.  
\end{lm}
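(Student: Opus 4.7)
The plan is to reduce the statement to the standard one-dimensional Daubechies construction and then obtain the multidimensional basis via the usual tensor-product MRA argument. The pair $(\psi_F,\psi_M)$ will play the role of a scaling function (``father'' wavelet) and its associated ``mother'' wavelet, respectively.

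First, I would invoke the theorem of Daubechies, which produces, for any prescribed smoothness class $C^k$, a compactly supported scaling function $\psi_F\in C^k(\mathbb R)$ and an associated compactly supported wavelet $\psi_M\in C^k(\mathbb R)$ such that the family
$$
\bigl\{\psi_F(\,\cdot-\mu)\bigr\}_{\mu\in\mathbb Z}\cup\bigcup_{\la\ge 0}\bigl\{2^{\la/2}\psi_M(2^{\la}\cdot-\mu)\bigr\}_{\mu\in\mathbb Z}
$$
is an orthonormal basis of $L^2(\mathbb R)$; equivalently, setting $V_0=\overline{\textup{span}}\{\psi_F(\,\cdot-\mu)\}$ and $W_\la=\overline{\textup{span}}\{2^{\la/2}\psi_M(2^\la\cdot-\mu)\}$, one has the orthogonal decomposition $L^2(\mathbb R)=V_0\oplus\bigoplus_{\la\ge 0}W_\la$. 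This is the content of the Daubechies construction as presented, for instance, in Meyer's book.

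Next, I would form the $2n$-fold tensor product of this one-dimensional MRA with itself. Writing $L^2(\mathbb R^{2n})=\bigotimes_{i=1}^{2n}L^2(\mathbb R)$ and inserting the decomposition in each factor, I obtain an orthogonal direct sum indexed by choices of either $V_0$ or some $W_{\la_i}$ in each coordinate. The standard bookkeeping step is to collect, for each dyadic scale $\la\ge 0$, all the terms in which $\max_i\la_i=\la$; this exactly corresponds to tensor products $\Psi^G$ with at least one coordinate $G_i=M$ placed at scale $\la$ while the remaining coordinates can be either $F$ or $M$ at the same scale (absorbed via the refinement relation $V_\la=V_{\la-1}\oplus W_{\la-1}$ in the finer coordinates). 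After this regrouping, what remains at the coarsest scale is the single tensor $\Psi^{(F,\dots,F)}(\cdot-\vec\mu)$, and at every finer scale $\la\ge 0$ one obtains the $2^{2n}-1$ tensors $\Psi^G$ with $G\in\mathcal I\setminus\{(F,\dots,F)\}$, dilated by $2^\la$ and translated by $\vec\mu\in\mathbb Z^{2n}$, with the proper $L^2$ normalization factor $2^{\la n}$ (the correct power because each of the $2n$ coordinates contributes $2^{\la/2}$).

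Orthonormality is automatic from the one-dimensional orthonormality in each factor, and compact support and $C^k$ smoothness of each $\Psi^G$ follow from the corresponding properties of $\psi_F$ and $\psi_M$. The only delicate step is the combinatorial bookkeeping of the tensor MRA that shows completeness, i.e.\ that grouping by the maximal scale in each coordinate yields exactly the index set $\mathcal I\setminus\{(F,\dots,F)\}$ at scales $\la\ge 0$ together with the coarse term $\Psi^{(F,\dots,F)}$; I expect this to be the main, though entirely routine, obstacle, and it is carried out explicitly in Triebel's construction which I would cite for the final assembly.
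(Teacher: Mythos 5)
Your proposal is correct, and it is essentially the argument the paper relies on: the paper does not prove this lemma but cites Daubechies/Meyer for the one-dimensional compactly supported $C^k$ wavelets and Triebel for exactly this tensor-product (product-type) assembly, which is the construction you carry out, including the correct normalization $2^{\la n}$ and the regrouping by maximal scale using $V_\la=V_0\oplus W_0\oplus\cdots\oplus W_{\la-1}$.
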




\begin{proof}[Proof of Proposition \ref{D}] 

To obtain the estimate, we first decompose the symbol into dyadic pieces, estimate them separately, and then use orthogonality arguments to put them back together.   
Let us take a look at the the symbol $\widehat K_j^0$ which we denote $m_{j,0}$. 
The classical estimates show that 
\begin{equation}\label{del9}
\|m_{j,0}\|_{L^\infty}=\|\wh{K_j^0}\|_{L^{\infty}}\leq C\|\Om\|_{L^q}2^{-\delta j}, \qq 0<\delta<1/q', 
\end{equation}
while for $2\le q\le\nf$
\begin{equation}\label{del99}
\|m_{j,0}\|_{L^2}=\|\be_j ( \widehat{ \be_0 K })\|_{L^2} \leq C\|\widehat{\be_0 K}\|_{L^2}\leq C\|\Om\|_{L^2}\le
C \|\Om\|_{L^q}.
\end{equation}
We observe that for the case $i\neq 0$ we have the identity $m_{j,i}=\widehat K_j^i=m_{j,0}(2^{i}\cdot)$ from the homogeneity of the symbol, 
and thus $m_{j,i}$ also lies in $L^2$.

We utilize a wavelet transform of $m_{j,0}$. We take the product wavelets   described above, with compact supports and $M$ vanishing moments, where $M$ is a large number to be determined later. Here  we choose   generating functions with support diameter  approximately $1$. The wavelets with the same dilation  factor $2^\la$ have some bounded overlap $N$ independent of $\la$.  Since the inverse Fourier
transform of $m_{j,0}$ is essentially supported in the dyadic annulus of 
  radius $1$, the symbol is smooth and the wavelet transform has  a  nice decay. Precisely 
  with 
  $$
\Psi^{\la,G}_{\vec \mu} (\vec x\, )=  2^{\la n}\Psi^{G} (2^{\la}\vec x -\vec\mu)\, , \qq \vec x\,\in \mathbb R^{2n},
  $$
we have the following result: 
\begin{lm}\label{wavesize}
Using the preceding notation, for any $j\in \mathbb Z$ and $\la \in \mathbb N_0$ we have 
\begin{equation}\label{887}
|\langle \Psi^{\la,G}_{\vec \mu}, m_{j,0}\rangle| \leq C\|\Om\|_{L^q}\, 2^{-\delta j}2^{-(M+1+n)\la} \, ,
\end{equation}
where $M$ is the number of vanishing moments of $\psi_M$ and $\de$ is as in \eqref{del9}. 
\end{lm}
\begin{proof}
Let $\la\ge0$ and $G\in \mathcal I\setminus \{(F,\dots , F)\}$. 
We apply the smoothness-cancellation estimate in Appendix B.2 of \cite{Gra14m} with $\Psi$ being the function 
$\Psi_{\vec \mu}^{\la,G}$,
$L=M+1$, and $\Phi$ being the function $m_{j,0}$.  Then we have the properties

(i) $\int_{\mathbb R^{2n}} \Psi_{\vec \mu}^{\la,G}(\vec x\,)\, \vec x\,^{\be}\, d\vec x\,=0$ for $|\be|\le L-1$,

(ii) $|\Psi_{\vec \mu}^{\la,G}(\vec x\,)|\le \f{C2^{\la n}}{(1+2^\la |\vec x\,-2^{-\la}\vec \mu|)^{M_1}}$,

(iii) For $|\al|=L$, $|\p^\al(m_{j,0})(\vec x\,)|\le \f{C\|\Om\|_{L^q}2^{-j\de}}{(1+2^{-j}|\vec x\,|)^{M_2}}$. To verify this property  we notice that since 
 $\be_0$ is a Schwartz function,  we have 
\begin{align*}
|\p^{\al}(\be_j\wh{K^0}) (\vec x\,) |\le\,\,&\,\,
\sum_{\ga\le\al}2^{-j|\ga|}|\p^{\ga}\be_0(2^{-j}\vec x\,)\p^{\al-\ga}\wh{K^0}(\vec x\,)|\\
\le&\,\, C\|\Om\|_{L^q}\sum_{\ga\le\al}2^{-j|\ga|}\f{2^{-j\de}}{(1+2^{-j}|\vec x\,|)^{M_2}}\\
\le&\,\, C\|\Om\|_{L^q}\f{2^{-j\de}}{(1+2^{-j}|\vec x\,|)^{M_2}},
\end{align*}
where we used  Lemma \ref{K0size}, i.e. the property that $|\p^{\al}\wh{K^0}(\vec x\,)|\le C\|\Om\|_{L^q}|\vec x\,|^{-\de}$ for all multiindices $\al$.  

Thus $\Psi_{\vec \mu}^{\la,G}$ has cancellation and $m_{j,0}$ has appropriate smoothness and so it follows that 
$$
|<\Psi_{\vec \mu}^{\la,G},m_{j,0}>|
 \le C\|\Om\|_{L^q}\f{2^{-j\de}2^{\la n}2^{-\la (L+2n)}}{(1+2^{-j-\la }|\vec \mu|)^{M_2}} 
\le C\|\Om\|_{L^q}2^{-j\de}2^{-\la (M+1+n)}, 
$$
thus \eqref{887} holds. Notice that the constant $C$ is independent of $\vec \mu$.

Next we consider the case $\la=0$ and $G=(F,\dots , F)$. In this case we have $|\Psi_{\vec \mu}^{\la,G}(\vec x\,)|\le \f{C }{(1+  |\vec x - \vec \mu|)^{M_1}}$ and
$| m_{j,0} (\vec x\,)|\le \f{C\|\Om\|_{L^q}2^{-j\de}}{(1+2^{-j}|\vec x\,|)^{M_2}}$. Using the result in Appendix B1 in  \cite{Gra14m} we deduce that
$$
|<\Psi_{\vec \mu}^{\la,G},m_{j,0}>|
\le C\|\Om\|_{L^q}\f{2^{-j\de} }{(1+2^{-j  }|\vec \mu\,|)^{M_2}}
\le C\|\Om\|_{L^q}2^{-j\de}  
$$
and thus \eqref{887} follows in this case as well.  
\end{proof}



The wavelets sharing the same generation index  may be organized into $C_{n,M,N}$ groups so that members of the same group have disjoint supports and are of the same product type, i.e., they have the same index $G\in \mathcal I$. For $1\leq \kappa \leq C_{n,M,N} $
we denote by $D_{\lambda,\kappa}$  one of these groups consisting of wavelets 
 whose supports have  diameters about $2^{-\lambda}$.  
We now have that the wavelet expansion
$$
m_{j,0}=\sum_{\substack{ \lambda\geq 0 \\ 1\leq \kappa \leq C_{n,M,N} }}\sum_{\omega\in D_{\lambda,\kappa}} a_\omega \omega
$$
and $\omega$ all have disjoint supports within the group $D_{\lambda,\kappa}$. For the sequence $a=\{a_{\om}\}$ we get $\|a\|_{\ell^2}\leq C$, 
in view of \eqref{del99}, 
because $\{\omega\}$ is an orthonormal basis. 
Since the $\omega$ are continuous functions and  and bounded by $2^{\la n}$, if we set $b_\omega=\|a_\omega \omega\|_{L^\infty}$, 
 we have 
$$
\|\{b_\omega\}_{\omega\in D_{\lambda,\kappa}}\|_{\ell^2}
\le2^{\la n}\Big(\sum_{\omega\in D_{\lambda,\kappa}} |a_{\om}|^2\Big)^{1/2}
\leq C\|\Om\|_{L^2}2^{n\lambda}.
$$
Clearly we also have  
\begin{equation}\label{boundb}
\|\{b_\omega\}_{\omega\in D_{\lambda,\kappa}}\|_{\ell^\infty} \leq \| \{a_\omega\}_{\omega\in D_{\lambda,\kappa}} \|_{\ell^\nf}2^{n\la} 
\leq C\|\Om\|_{L^q}2^{-\delta j-(M+1)\lambda}.
\end{equation}
Now, we split the group $D_{\lambda,\kappa}$ into three parts. Recall the fixed integer $j$ in the statement of Proposition \ref{D}. We define sets 
$$D^1_{\lambda,\kappa} =\Big\{\omega\in D_{\lambda,\kappa}: a_\om \neq 0, \,\, {\rm{supp } \omega} \subset \{(\xi_1,\xi_2):\,\, 2^{-j}|\xi_1| \leq |\xi_2|\leq 2^{j}|\xi_1|\}\Big\},$$
$$D^2_{\lambda,\kappa} =\Big\{\omega\in D_{\lambda,\kappa}: a_\om \neq 0, \,\, {\rm{supp } \omega} \cap \{(\xi_1,\xi_2):\,\,  2^{-j}|\xi_1| \geq |\xi_2|\} \neq \emptyset\Big\},$$
and 
$$D^3_{\lambda,\kappa} =\Big\{\omega\in D_{\lambda,\kappa}: a_\om \neq 0, \,\, {\rm{supp } \omega} \cap \{(\xi_1,\xi_2):\,\,  2^{-j}|\xi_2| \geq |\xi_1|\} \neq \emptyset\Big\}.$$
These groups are  disjoint for large $j$. 
Notice that
$D_{\la,\kappa}^1\cap D_{\la,\kappa}^2=\emptyset$ is obvious. For 
$D_{\la,\kappa}^2$ and $D_{\la,\kappa}^3$  the worst case is   $\la=0$ 
when we have balls of radius $1$ centered at integers, and $D_{\la,\kappa}^2\cap D_{\la,\kappa}^3=\emptyset$    if  $j $
is sufficiently large, for instance $j \ge 100 \sqrt{n}$ works, since 
if $a_\om \neq 0$, then $\om$ is supported in an annulus  centered at the origin of size about $2^j$.   We are assuming here that 
$j \ge 100 \sqrt{n}$ but notice that for  $j < 100 \sqrt{n}$, Proposition \ref{D} is an easy consequence of Proposition \ref{Good}.

We denote, for $\iota=1,2,3$, 
$$ m_{j,0}^\iota= \sum_{\lambda,\kappa} \sum_{\omega\in {D^\iota_{\lambda,\kappa}}} a_\omega \omega,$$
and define
$$
 m^{\iota}_j =\sum_{k=-\nf}^{\nf} m^{\iota}_{j,k} 
$$
with 
$m^{\iota}_{j,k}(\vec \xi\, )=m_{j,0}^{\iota}(2^k\vec \xi\, )$. We  prove   boundedness
for each piece $m_j^1, m_j^2,m_j^3$. We call $m_j^1$ the diagonal part of $m_j$ and $m_j^2,m_j^3$ the off-diagonal parts of $m_j =\wh {K_j}$. 

\section{The diagonal part}

We first deal with the first group $D^1_{\lambda,\kappa}.$ 
Each $\omega\in D^1_{\lambda,\kappa}$ is of   tensor product type $\omega=\omega_1\omega_2$, therefore, we may index the sequences by two indices $k,l\in \mathbb Z^n$ according to the first and second variables. Thus $\omega_{k,l}=\omega_{1,k}\omega_{2,l}$. Likewise, we index the sequence 
$b=\{b_{(k,l)}\}_{k,l}$.  
Now for $r\ge0$ we define sets
$$
U_r=\{(k,l) \in \mathbb Z^{2n}:2^{-r-1}\|b\|_{\ell^\infty}<|b_{(k,l)}|\leq 2^{-r}\|b\|_{\ell^\infty}\}.
$$
From the $\ell^2$ norm of $b$, we find that the cardinality of this set is at most $C\|\Om\|_{L^2}^22^{2n\lambda}2^{2r}\|b\|_{\ell^\infty}^{-2}.$
Indeed,  we have 
$$
|U_r|\le 4\sum_{(k,l)\in U_r}|b_{(k,l)}|^2(\|b\|_{\ell^\nf}2^{-r})^{-2}\le 4\|b\|_{\ell^2}^2\|b\|_{\ell^\nf}^{-2}2^{2r}\le 
C \f{ \|\Om\|^2_{L^2}} {\|b\|_{\ell^\nf}^{ 2}}   2^{2n\la}2^{2r}.
$$
We split each $U_r=U_r^1\cup U_r^2 \cup U_r^3 $, where 
$$
U_r^1=\{(k,l)\in U_r:{\rm card}\{s: (k,s)\in U_r\}\geq 2^{(r+\de j+M\la)/4} \},
$$
$$
U_r^2=\{(k,l)\in U_r\setminus U_r^1:{\rm card}\{s:(s,l)\in U_r\setminus U_r^1\}\geq 2^{(r+\de j+M\la)/4}\}.
$$
and the third set is the remainder. These three sets are disjoint. We notice that if the index $k$ satisfies 
${\rm card}\{s: (k,s)\in U_r\}\geq 2^{(r+\de j+M\la)/4}$, 
then the pair $(k,l)$ lies in $U_r^1$ for all $l\in \mathbb Z^n$ such that 
$(k,l) \in U_r$. 

We observe that in the first set $U_r^1$, we have 
\begin{equation}\label{N1}
N_1:={\rm card}\{ k: {\rm there \; is\;} l\;{\rm s.t.}\; (k,l)\in U_r^1\}\leq C\f{ \|\Om\|_{L^2}^2}{\|b\|_{\ell^\infty}^{ 2} } 
2^{(2n-M/4)\lambda+\f{ 7r}4- \f {j\de} 4},
\end{equation}
since $N_12^{(r+\de j+M\la)/4}\le C\|\Om\|_{L^2}^22^{2n\la}2^{2r}\|b\|_{\ell^\infty}^{-2}$.
We now write $$m_j^{r,1}=\sum_{(k,l)\in U_r^1} a_{(k,l)}\omega_{1,k}\omega_{2,l}$$ and estimate the norm of $m_j^{r,1}$ as a bilinear multiplier as follows:
$$
\begin{aligned}
\|T_{m_j^{r,1}}(f,g)\|_{L^1}
&\leq \Big\|\sum_{(k,l)\in  U_r^1} a_{(k,l)}{\mathcal F}^{-1}(\omega_{1,k}\widehat f\,){\mathcal F}^{-1} (\omega_{2,l}\widehat g\,)\Big\|_{L^1}
\\&\leq \sum_{k\in E} \|\omega_{1,k}\widehat f\,\|_{L^2} \Big\|\sum_{(k,l)\in U_r^1}a_{(k,l)} \omega_{2,l}\widehat g \,\Big\|_{L^2}.
\end{aligned}
$$

For fixed $k$, by the choice of $D_{\la,\kappa}$, the supports of $\om_{k,l}=\om_{1,k}\om_{2,l}$
are disjoint, in particular, the supports of $\om_{2,l}$ are disjoint.  Since 
$\|\om_{1,k}\|_{L^\nf}\approx2^{\la n/2}$, we have the estimate
$$
\Big\|\!\sum_{(k,l)\in U_{r}^1}a_{(k,l)}\om_{2,l}\Big\|_{L^\nf} \le 
C \Big\|\sum_{(k,l)\in U_r^1}\! |b_{(k,l)}|
2^{-\la n/2}\chi_{E_l} \Big\|_{L^\nf} 
\le C\|b\|_{\ell^\nf}2^{-r}2^{-\la n/2},
$$
where $E_{l}\subset\bbr^n$ is the support of $\om_{2,l}$.
As a result, 
$$
\Big\|\sum_{(k,l)\in U_r^1}a_{(k,l)}\omega_{2,l}\wh g\, \Big \|_{L^2}
\le C\|b\|_{\ell^\nf}2^{-r}2^{-\la n/2}\|g\|_{L^2}.
$$
Now let $E=\{k:\exists\ l\ s.t.\ (k,l)\in U^1_r\}$ and note that $|E| = N_1$.

Notice that the  $\om_{k,l}$ in $U^1_r$ have the following property. If $(k,l) \neq (k',l')$, then the supports
of $\om_{1,k}$ and $\om_{1,k'}$  are disjoint. 
Since the $\om_{1,k}$ satisfy $\|\om_{1,k}\|_{L^\nf}\approx 2^{\la n/2}$ and have disjoint supports,  
we have
\begin{align*}
\big\|&T_{m_j^{r,1}}(f,g)\big\|_{L^1} \\
 \le\,&\,\,\sum_{k\in E} \big\|\omega_{1,k}\wh f\, \big\|_{L^2}  2^{-\la n/2}2^{-r}\|b\|_{\ell^\infty} \|g\|_{L^2}\\
 \le\,&\,\,\Big(\sum_{k\in E}1\Big)^{1/2}\Big(\sum_{k\in E}\big\|\om_{1,k}\wh f\, \big\|_{L^2}^2\Big)^{1/2}2^{-\la n/2}
2^{-r}\|b\|_{\ell^\nf}\|g\|_{L^2}\\
 \le\, &\,\, C\Big(\|\Om\|_{L^2}^22^{(2n-M/4)\lambda+7r/4-\de j/4}\|b\|_{\ell^\infty}^{-2}\Big)^{\f12}2^{\la n/2}\|f\|_{L^2} 2^{-\la n/2}2^{-r}
\|b\|_{\ell^\nf}\|g\|_{L^2}\\
\le\,&\,\, C\|\Om\|_{L^2}2^{(n-M/8)\la-r/8-\de j/8}\|f\|_{L^2}\|g\|_{L^2},  
\end{align*}
where we used \eqref{N1} and \eqref{boundb}. 
This gives sufficient decay in $j$, $r$ and $\lambda$ if $M\ge 16n$. The set $U_r^2$ is handled the same way.

To estimate the set $U_r^3$, we further decompose it into at most
$2^{(r+\de j+M\la)/2} $ disjoint sets $V_s$, such that if $(k,l),(k',l')\in V_s$ then $(k,l)\neq(k',l')$ implies $k\neq k'$ and $l\neq l'$. 
Indeed, by the definition of $U_r^3$, for each $(k,l)$ in it with $k$ fixed  there exist   at most 
$N_2$ pairs $(k,l')$ in $U_r^3$ with $N_2=2^{(r+\de j+M\la)/4}$. Otherwise,
it is in $U^1_r$ and therefore a contradiction. Similarly 
for each $(k,l)$ in  $U_r^3$ with $l$ fixed
we have at most $N_2$  pairs 
$(k',l)$ in $U^3_r$. Therefore 
we have at most $N_2^2= C2^{(r+\de j+M\la)/2}$  sets $V_{s}$
satisfying the claimed property.

For each of these sets, since $|a_{\om}|=C|b_{\om}|2^{-\la n}$,  for the multiplier 
$$
 m_j^{V_s} = \sum_{(k,l)\in  V_s} a_{(k,l)} \omega_{1,k} \omega_{2,l} 
$$
we have the following estimate
$$
\begin{aligned}
\big\|T_{m_j^{V_s}}(f,g)\big\|_{L^1}
&\leq \sum_{(k,l)\in V_s}|b_{(k,l)}|2^{-\la n}\big\|{\mathcal F}^{-1}(\omega_{1,k}\widehat f\,){\mathcal F}^{-1} (\omega_{2,l}\widehat g\,)\big\|_{L^1}
\\& \leq C2^{-r} \|b\|_{\ell^\infty} 2^{-\la n}\Big[\!\sum_{(k,l)\in  V_s} \big\|\omega_{1,k}\widehat f\, \big\|_{L^2}^2\Big]^{\frac 12} \!
\Big[\!\sum_{(k,l)\in  V_s} \big\|\omega_{2,l}\widehat g\, \big\|_{L^2}^2\Big]^{\frac 12}
\\& \leq C2^{-r} \|b\|_{\ell^\infty} \|f\|_{L^2}\|g\|_{L^2}.
\end{aligned}
$$
Summing over $s$ and using   estimate \eqref{boundb} and the fact that $N_2^2= C2^{r /2} \|b\|_{\ell^\infty}^{-1/2 }$ yields
\begin{eqnarray*}
  \big\|T_{m_j^{r,3}}(f,g)\big\|_{L^1} &\leq  &
{ 
N_2^2 2^{-r}\|b\|_{\ell^\nf}\|f\|_{L^2}\|g\|_{L^2} } \\
&\le& { 
C2^{(r+\de j+M\la)/2}2^{-r} \|b\|_{\ell^\infty} \|f\|_{L^2}\|g\|_{L^2} }\\
&\le &C\|\Om\|_{L^q} 2^{(-r-\de j-M\la)/2}\|f\|_{L^2}\|g\|_{L^2},
\end{eqnarray*} 
which is also a  good decay. We then have
\begin{eqnarray*}
\, \big\|T_{m_j^r}(f,g)\big\|_{L^1}& \le 
&{ 
\Big[  \big\|T_{m_j^{r,1}}(f,g)\big\|_{L^1}+\big\|T_{m_j^{r,2}}(f,g)\big\|_{L^1}\Big]+ \big\|T_{m_j^{r,3}}(f,g)\big\|_{L^1} }\\
&\le &\,\, { 
C\|\Om\|_{L^2}2^{(n-M/8)\la}2^{-r/8}2^{-\de j/8}\|f\|_{L^2}\|g\|_{L^2}}\\
&&{ 
\qq\qq\q+C\|\Om\|_{L^q} 2^{(-r-\de j-M\la)/2}\|f\|_{L^2}\|g\|_{L^2}}\\
&\le &\,\, C\|\Om\|_{L^q}2^{(-2\de j-M\la-r)/16}\|f\|_{L^2}\|g\|_{L^2}.
\end{eqnarray*}

 Set $f^j= \mathcal F^{-1} (\widehat{f}\chi_{\{c_1\leq |\xi_1| \leq c_2 2^{j+1}\}})$ and 
 $g^j= \mathcal F^{-1} (\widehat{g}\chi_{\{c_1\leq |\xi_2| \leq c_2 2^{j+1}\}})$ for some suitable constants $c_1,c_2>0$. 
In view of the
preceding estimate
for the piece $ m_{j,0}^1= \sum_{\lambda,\kappa} \sum_{\omega\in {D^1_{\lambda,\kappa}}} a_\omega \omega$,    we have 
\begin{align*}
\|T_{ m_{j,0}^1}(f,g)\|_{L^1} = \,\, & \,\, \|T_{ m_{j,0}^1}(f^j,g^j)\|_{L^1} \\
\le  \,\,&{ 
C\|\Om\|_{L^q} \,\,\sum_{\kappa =1}^{  C_{n,M,N}}\sum_{\la\ge0}\sum_{r\ge 0}2^{(-2\de j-M\la-r)/16}\|f^j\|_{L^2}\|g^j\|_{L^2}}\\
\leq  \,\,& \,\, C\|\Om\|_{L^q}2^{-\de j/8 } \|f^j\|_{L^2}\|g^j\|_{L^2}\, .
\end{align*}

The first  equality was obtained from the support properties of $m_{j,0}^1$, 
which comes from the observation that
$m_{j,0}(\vec \xi \,  )\neq 0$ only if 
$|\vec \xi \, |\approx 2^j$, and that $2^{-j}|\xi_1|\le|\xi_2|\le 2^j|\xi_1|$.
Now recall that $m_{j,k}^1(\vec \xi \, )=m^1_{j,0}(2^k\vec \xi \,  )$,   so
\begin{align*}
\,\,&\,\,T_{m_{j,k}^1}(f,g)(x)\\
=\,\,&\,\,\int_{\bbr^{2n}}m_{j,0}^1(2^k \vec \xi \,  )\wh f(\xi_1)\wh g(\xi_2)e^{2\pi ix\cdot(\xi_1+\xi_2)}d\xi_1d\xi_2\\
=\,\,&\,\,\int_{\bbr^{2n}}m_{j,0}^1(\vec \eta\,)
\wh f(2^{-k}\eta_1)\wh g(2^{-k}\eta_2)e^{2\pi i2^{-k}x\cdot(\eta_1+\eta_2)}
2^{-2kn}d\eta_1d\eta_2.
\end{align*}
Denote by $f_k$ the function whose Fourier transform is $\wh f(2^{-k}\xi_1)$
and $E_{j,k}=\{\xi_1\in \bbr^n:\, c_1 2^{-k }\le|\xi_1|\le c_2 2^{j-k}\}$, then
\begin{align*}
\|T_{m_{j,k}^1}(f,g)\|_{L^1}
=\,\,&\,\,  
2^{-2kn}\|T_{m_{j,0}^1}(f_k,g_k)(2^{-k}\cdot)\|_{L^1} \\
=\,\,&\,\,2^{-kn}\|T_{m_{j,0}^1}(f_k,g_k)\|_{L^1}\\
\le\,\,&\,\,  
C\|\Om\|_{L^q}2^{-kn}2^{-\de j/8}\|\wh f(2^{-k}\cdot)\chi_{E_{j,0}}\|_{L^2}\|\wh g(2^{-k}\cdot)\chi_{E_{j,0}}\|_{L^2}   \\
= \,\,&\,\,  C\|\Om\|_{L^q}2^{-\de j/8} \|\wh f\, \|_{L^2(E_{j,k})}\|\wh g\, \|_{L^2(E_{j,k})}
\, .
\end{align*}
Using this estimate and applying the
Cauchy-Schwarz inequality 
we obtain for the diagonal part
$
m_{j}^1=\sum_{k\in \mathbb Z}m_{j,k}^1
$
the estimate
\begin{align*}
\|T_{ m_{j}^1}(f,g)\|_{L^1}\leq&
\sum_{k=-\nf}^{\nf}\|T_{m_{j,k}^1}(f,g)\|_{L^1}\\
\le \,\,&\,\,{ 
C\|\Om\|_{L^q}2^{-\de j/8}\sum_{k=-\nf}^{\nf}\|\wh f\, \|_{L^2(E_{j,k})}\|\wh g\, \|_{L^2(E_{j,k})} }\\
\le\,\,&\,\,C\|\Om\|_{L^q}2^{-\de j/8}\Big(\sum_k\|\wh f\, \|^2_{L^2(E_{j,k})}\Big)^{\frac12}
\Big(\sum_k\|\wh g\, \|^2_{L^2(E_{j,k})}\Big)^{\frac12}\\
\le\,\,&\,\,{ 
C\|\Om\|_{L^q}2^{-\de j/8}j^{1/2}\|f\|_{L^2} j^{1/2}\|g\|_{L^2} }
\\ =\,\,&\,\, C\|\Om\|_{L^q}j2^{-\de j/8} \|f\|_{L^2}\|g\|_{L^2}
\end{align*}
since $\sum_{k=-\nf}^{\nf}\chi_{E_{j,k}}\le j$. 
This completes  the decay of the first piece $m_j^1$.

\section{The off-diagonal parts}

We now   estimate the off-diagonal parts of the operator,
namely $T_{ m_{j}^2}$ and $T_{ m_{j}^3}$.
To control these two operators, we need the following inequality, 
\begin{equation}\label{SIn}
\|T_{ m_{j}^2}(f,g)+T_{ m_{j}^3}(f,g)\|_{L^1}\le
 C\, \|\Om \|_{L^q}\, 2^{-j\de}\|f\|_{L^2}\|g\|_{L^2}.
\end{equation}
which will be discussed in Lemma \ref{SInL}.

Now we show that the right hand side of \eqref{SIn} is finite.
Let us select a group $D^2_{\lambda,\kappa} $ for some $\kappa$. For $\omega \in D^2_{\lambda,\kappa}$ we have 
the estimate $\|b_\omega\|_{L^\infty} \leq C\|\Om\|_{L^q} 2^{-j\epsilon}2^{-M\lambda}.$ We further divide the 
group $D^2_{\lambda,\kappa}$ into columns $D^{2,a}_{\lambda,\kappa}$
such that all wavelets in a given 
column have the form $\omega=\omega_1\omega_2^a$ with the same
$\omega_2^a$, where $a= (\mu_{n+1}, \dots , \mu_{2n})\in \mathbb Z^{n}$. 
Notice that $\om\in D_{\la,\kappa}^2$ implies that $|\xi_2|\le 2$, and 
each $\om_2^a$ is supported
in the cube 
$$
Q=[2^{-\la}(\mu_{n+1}-c),2^{-\la}(\mu_{n+1}+c)]\times\cdots\times
[2^{-\la}(\mu_{2n}-c),2^{-\la}(\mu_{2n}+c)]  
$$
for some $c\approx 1$.
Therefore, we have at most $C\, 2^{\la n}$
choices of $(\mu_{n+1},\cdots,\mu_{2n})$, i.e. there exist at most $C\, 2^{\la n}$ 
different $\om_2^a$ and $C\, 2^{\la n}$ different columns.

For the multiplier $m^{2,a}_{\lambda,\kappa}$ related to the column of $\om^a_2$,
 we then get 
\begin{align*}
\int_{\bbr^{2n}}&\sum_{\om_1}a_{\om}\om_1(\xi_1)\om_2^a(\xi_2)
\wh f(\xi_1)\wh g(\xi_2)e^{2\pi ix\cdot (\xi_1+\xi_2)}d\xi_1d\xi_2\\
=\,\,&\,\,\Big[\sum_{\om_1}a_{\om}T_{\om_1}(f)(x)\Big]   \Big[T_{\om_2^a}(g)(x)\Big]
\end{align*}
with $\om_2^a(\xi_2)=2^{\la n/2}\om_2(2^{\la}\xi_2-a)$. Notice that 
\begin{align*}
|T_{\om_2}^a(g)(x)|=
\,\,&  
\,\,\bigg|2^{-\la n/2}\int_{\bbr^n}     \mathcal F^{-1}(\om_2 )(2^{-\la}(x-y))g(y)
e^{2\pi i2^{-\la}(x-y)\cdot a}dy \bigg|  \\
\le\,\,&\,\,   
2^{-\la n/2}\int_{\bbr^n}\f{g(y)}{(1+2^{-\la}|x-y|)^M}\, dy \\
\le\,\,&\,\, 2^{\la n/2}\mathcal M(g)(x), 
\end{align*}
where $\mathcal M$ is the Hardy-Littlewood maximal function. 
We define 
$$
m_{a,\la}(\xi_1)=\f{\sum_{\om_1}a_{\om}\om_1(\xi_1)\chi_{2^{j-1}\le|\xi_1|\le 2^{j+1}}}
{2^{-j\de}2^{-(M+1+\f n2)\la}},
$$
and then we have 
$$
\sum_{\om_1}a_{\om}T_{\om_1}(f)(x)=2^{-j\de}2^{-(M+1+\f n2)\la}
\int_{\bbr^n}m_{a,\la}(\xi_1)\wh f (\xi_1)e^{2\pi ix\cdot \xi_1}d\xi_1\, , 
$$
since the supports of $\om_1$'s are disjoint and are all contained in the annulus 
$\{\xi_1:\, 2^{j-1}\le|\xi_1|\le 2^{j+1}\}$. 
In view of \eqref{887} in Lemma \ref{wavesize} we have
$|a_{\om}|\le C_M\|\Om\|_{L^q} 2^{-j\de}2^{-(M+1+n)\la}$ and this combined with 
$\|\om_1\|_{L^\nf}\le C2^{n\la/2} 
$
implies that
$|m_{a,\la}|\le C\|\Om\|_{L^q}\chi_{2^{j-1}\le|\xi_1|\le 2^{j+1}}$. Therefore
for the  multiplier   $m=\sum_{\la}2^{-M\la}\sum_{a}m_{a,\la}$ we have
$$
\|T_m(f)\|_{L^2}
\le C\|\Om\|_{L^q}\|\wh f\chi_{2^{j-1}\le|\xi_1|\le 2^{j+1}}\|_{L^2},
$$
since for each fixed $\la$ there exist at most $C2^{\la n}$ indices $a$.

We now can control $T_{m_{j,0}^2}(f,g)(x)$
by 
$
C2^{-j\ep}\mathcal M(g)(x)T_m(f)(x).
$
Recall that $m_{j,k}^2(\vec \xi\, )=m_{j,0}^2(2^k \vec \xi\,)$, then if $f_k$ is the function whose Fourier transform is $\wh f(2^{-k}\xi_1)$,  we have  
$$
|T_{m_{j,k}^2}(f,g)(x)|\le C 2^{-j\de}2^{-2kn} \mathcal M(g_k)(2^{-k}x) |T_m(f_k)(2^{-k}x)|\, .
$$ 

As a result 
\begin{align*}
\,\,&\,\,\Big\|\Big(\sum_{k\in 5\mathbb Z} |T_{ m_{j,k}^2}(f,g)|^2\Big)^{1/2}\Big\|_{L^1}\\
\le \,\,&\,\,\int_{\bbr^n} \Big(\sum_{k\in 5\mathbb Z}|2^{-j\ep}2^{-kn} \mathcal M(g)(x)T_m(f_k)(2^{-k}x)|^2\Big)^{1/2}dx\\
\le\,\,&\,\, 
C 2^{-j\de}\|\mathcal M(g)\|_{L^2}
\bigg(\int_{\bbr^n}\sum_{k\in 5\mathbb Z}|2^{-kn}T_m(f_k)(2^{-k}x)|^2dx\bigg)^{1/2}  \\
\leq\,\,&\,\, C\|\Om\|_{L^q} 2^{-j\de} \|g\|_{L^2}
\bigg(\sum_{k\in 5\mathbb Z}\int_{\bbr^n}\chi_{\{2^{j+k-1}\le|\xi_1|\le2^{j+k+1}\}}|\wh f(\xi_1)|^2d\xi_1\bigg)^{1/2}\\
\leq\,\,&\,\, C\|\Om\|_{L^q}2^{-j\de}\|f\|_{L^2}\|g\|_{L^2}.
\end{align*}
The estimate for $T_{ m_{j}^3}$ is similar. Thus the proof of 
Proposition \ref{D} will be finished once we establish \eqref{SIn}.
The preceding estimate implies that for $f,g$ in $L^2$ we have 
\begin{equation}\label{kjhrr}
\Big\|\Big(\sum_{k\in 5\mathbb Z} |T_{ m_{j,k}^2}(f,g)|^2\Big)^{1/2}\Big\|_{L^1}<\nf\, 
\end{equation}
a fact that will be useful in the sequel. 

\begin{lemma}\label{SInL}
There is a constant $C $ such that   \eqref{SIn} holds for all $f,g$ in $L^2(\mathbb R^n)$.
\end{lemma}


\begin{proof}
We first show that there exists a polynomial $Q_1$ of $n$ variables such that
$T_{ m_{j}^2}(f,g)-Q_1\in L^1$.

Let $\psi\in \mathcal S(\bbr^n)$ such that $\wh \psi\ge0$ with 
supp$\wh\psi\subset\{\xi:1/2\le|\xi|\le 2\}$ and 
$\sum_{j=-\nf}^{\nf}\wh\psi(2^{-j}\xi)=1$ for $\xi\neq 0$.
Set $\wh\Phi=\sum_{j=-2}^2\wh\psi(2^{-j}\xi)$ and define
$\Delta_k(f)=\mathcal F^{-1} (\wh\Phi(2^{-k}\cdot) \wh f\, ) $. 

For $r=0,1,2,3,4$, define $m^{(r)}_j=\sum_{k\in5\bbz+r}m^2_{j,k}$. We will show that 
  there exists a polynomial  $Q_j^r$ such that
\begin{equation}\label{994455}
\|T_{m_j^{(r)}}(f,g)-Q_j^r\|_{L^1}
\le \Big\|\Big(\sum_{k\in 5\bbz+r}|T_{m_{j,k}^2}(f,g)|^2\Big)^{1/2}\Big\|_{L^1}.
\end{equation}
We prove this assertion only in the case $r=0$ as the remaining cases are similar. 
By Corollary $2.2.10$ in \cite{Gra14m} there is a polynomial $Q_1^0$ such that 
$$
\|T_{m_j^{(0)}}(f,g)-Q_1^0\|_{L^1}\le C\, \Big\|\Big(\sum_{k\in 5\bbz }|\Delta_k(T_{m_{j}^{(0)}}(f,g))|^2\Big)^{1/2}\Big\|_{L^1}.
$$
Notice that
\begin{align}\begin{split}\label{GeD}        
&\Delta_k(T_{m_{j}^{(0)}}(f,g))(x) = \\
& \sum_{l\in 5\bbz}\int_{\bbr^{2n}}\wh{\Phi}(2^{-k}(\xi_1+\xi_2))m_{j,0}^2(2^l\xi)
\wh f(\xi_1)\wh g(\xi_2)e^{2\pi ix\cdot(\xi_1+\xi_2)}d\xi_1d\xi_2.
\end{split}\end{align}
Observe that $m_{j,0}^2(\vec \xi\,)$ is supported in the set
$$
\{(\xi_1,\xi_2):\,\, 2^{j-1}\le|(\xi_1,\xi_2)|\le 2^{j+1},|\xi_1|\ge 2^j|\xi_2|\}
$$
which is a subset of 
$$
  \{(\xi_1,\xi_2):\,\, 2^{j-2}\le |\xi_1+\xi_2|\le 2^{j+2}\},
$$
so $m_{j,0}^2(2^l\vec \xi\,)$ is supported in
$\{(\xi_1,\xi_2):2^{j-l-2}\le |\xi_1+\xi_2|\le 2^{j-l+2}\}$.
The integrand in \eqref{GeD} is nonzero only if $k=j-l$, when
$\wh\Phi(2^{-k}\vec \xi\,)m_{j,0}^2(2^l  \vec \xi\, )=m_{j,0}^2(2^l  \vec \xi\,)$, otherwise
the product is $0$. 
In summary we obtained
\begin{equation}\label{9944}
\sum_{k\in 5\bbz}|\Delta_k(T_{m_{j}^{(0)}}(f,g))|^2
=\sum_{k\in 5\mathbb Z}|T_{ m_{j,k}^2}(f,g)|^2.
\end{equation}

Now \eqref{994455} is a consequence of \eqref{kjhrr} and \eqref{9944}.
Thus, there exist polynomials 
$Q_1, Q_2$ such that 
$T_{ m_{j}^2}(f,g)-Q_1,T_{ m_{j}^3}(f,g)-Q_2\in L^1$. 
Given $f,g$ in $L^2(\mathbb R^n)$, we have already shown that  $T_{ m_{j}^1}(f,g)$ lies in
$L^1$. Moreover, we showed in Proposition \ref{Good} that $T_{j}(f,g)$ lies in $L^1$. These 
facts  imply that $T_{ m_{j}^2}(f,g)+T_{ m_{j}^3}(f,g)$ lies in $  L^1$, and 
therefore $Q_1+Q_2=0$. Hence 
\begin{align*}
\|T_{ m_{j}^2}(f,g)+T_{ m_{j}^3}(f,g)\|_{L^1}
\le\,\, &\,\,\|T_{ m_{j}^2}(f,g)-Q_1\|_{L^1}+\|T_{ m_{j}^3}(f,g)-Q_2\|_{L^1}\\
\le\,\, &\,\,C\, \|\Om \|_{L^q}\, 2^{-j\de}\|f\|_{L^2}\|g\|_{L^2}.
\end{align*}
\end{proof}

This completes the proof of  Proposition \ref{D}.\end{proof}%

 \section{Boundedness everywhere when $q=\nf$}

\begin{prop}\label{InB}
Let $\Om\in L^{\nf}(\mathbb S^{2n-1})$, $1<p_1,p_2<\nf$, 
$1/p=1/p_1+1/p_2$. Then for any given $0<\ep<1$ there is a constant $C_{n,\ep}$ 
such that  
$$
\|T_j\|_{L^{p_1}\times L^{p_2}\rar L^{p}}\le C_{n,\ep}\|\Om\|_{\lnf} 2^{j\ep}
$$
for all  $j\ge 0$.    
\end{prop}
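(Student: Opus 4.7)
The plan is to prove Proposition \ref{InB} by bilinear complex interpolation, applied once in the ``easy'' region and then bootstrapped to reach the rest of the allowed range. I feed two inputs into the interpolation machinery: first, the decay estimate $\|T_j\|_{L^2\times L^2\to L^1}\le C\|\Om\|_{L^\nf}2^{-\delta j}$ furnished by Theorem \ref{MM}, valid at the single triple $(1/2,1/2,1)$; and second, the crude polynomial growth estimate $\|T_j\|_{L^{r_1}\times L^{r_2}\to L^r}\le C_{r_1,r_2}\|\Om\|_{L^\nf}2^{(2n-\delta)j}$ furnished by Proposition \ref{Good}, valid throughout the open square $1<r_1,r_2<\nf$.

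Fix $\ep>0$ and a target $(1/p_1,1/p_2)\in(0,1)^2$. I write it as a convex combination $(1/p_1,1/p_2)=(1-\theta)(1/2,1/2)+\theta(1/r_1,1/r_2)$ with $(1/r_1,1/r_2)\in[0,1]^2$, and apply the Riesz-Thorin-type interpolation theorem for bilinear operators (as in Grafakos-Torres or Calder\'on's bilinear interpolation) to the family $T_j$. This produces
$$\|T_j\|_{L^{p_1}\times L^{p_2}\to L^p}\le C\,\|\Om\|_{L^\nf}\,2^{(-\delta+2n\theta)j},$$
whose exponent is at most $\ep$ precisely when $\theta\le(\delta+\ep)/(2n)$. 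If the target lies in a small enough neighborhood of $(1/2,1/2)$, one can choose $(1/r_1,1/r_2)$ still inside $(0,1)^2$ with $\theta$ this small, and the claimed bound is obtained immediately.

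For targets farther from $(1/2,1/2)$ a single interpolation cannot keep $\theta$ below the required threshold, so I bootstrap. The sub-exponential bounds established in the previous round at points close to $(1/2,1/2)$ are used as new ``good'' endpoints and interpolated against the crude Proposition \ref{Good} estimate at points lying yet farther out. Each round enlarges the subset of $(0,1)^2$ on which $\|T_j\|\le C2^{\ep j}$ is known, and after finitely many rounds every interior triple is reached. The constant $C_{n,\ep}$ blows up as $\ep\to 0$, which is consistent with the statement of the proposition.

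The hard part is the bookkeeping of this iteration: at each stage one must choose intermediate endpoints where the preceding round has already produced a usable estimate, track how the effective exponent improves from one round to the next, and verify that the bilinear interpolation applies at every triple that appears. Since all endpoints used lie in the strongly admissible range $1<r_1,r_2<\nf$ (where Proposition \ref{Good} delivers strong-type bounds and Theorem \ref{MM} the decisive decay), no weak-type or endpoint subtleties arise, and the number of bootstrap steps required is controlled by the distance of the target from $(1/2,1/2)$ and by $1/\ep$.
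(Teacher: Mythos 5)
Your first interpolation step is sound, but the bootstrap that is supposed to carry the estimate from a neighborhood of $(\tfrac12,\tfrac12,1)$ to all of $1<p_1,p_2<\nf$ cannot work, and without it your argument proves the proposition only near the center. The obstruction is that operator-norm bounds of the form $2^{aj}$ interpolate affinely in the exponent: if at a point $P=(1-\theta_1)C+\theta_1Q_1$ (with $C=(\tfrac12,\tfrac12)$) you know the exponent $\ep_1=-\de+2n\theta_1$, and you interpolate with a crude point $Q_2$ carrying the exponent $2n-\de$ of Proposition \ref{Good}, then at $(1-s)P+sQ_2$ you obtain $(1-s)(-\de+2n\theta_1)+s(2n-\de)=-\de+2n\bigl(\theta_1+s(1-\theta_1)\bigr)$, i.e.\ exactly $-\de+2n\tau$ where $\tau$ is the convex weight of the new point away from $C$. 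Thus any number of rounds reproduces precisely the single-step bound, and the region where you can conclude $\|T_j\|\le C2^{\ep j}$ is always $\{\theta\le(\de+\ep)/(2n)\}$, a fixed small neighborhood of $(\tfrac12,\tfrac12,1)$; at points near a corner of the square the method can never give an exponent better than about $2n-\de$. This is exactly the limitation visible in the paper itself: for $\Om\in L^q$ with $q<\nf$ the authors have nothing better than this interpolation, and the outcome is Theorem \ref{All}, a neighborhood of $(\tfrac12,\tfrac12,1)$ of size comparable to $(q')^{-2}$, not the full range.

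The paper's proof of Proposition \ref{InB} therefore uses an ingredient that interpolation cannot supply. Lemma \ref{CZPP} shows that the kernel $K_j$ is a bilinear Calder\'on--Zygmund kernel with constant $A\le C_{n,\ep}\|\Om\|_{\lnf}2^{|j|\ep}$ for every $\ep\in(0,1)$: although the symbol estimates behind Proposition \ref{Good} grow like $2^{(2n-\de)j}$, the kernel size and smoothness constants deteriorate only sub-exponentially in $j$. Feeding this, together with the $L^2\times L^2\to L^1$ bound of Proposition \ref{D}, into the bilinear Calder\'on--Zygmund theorem of \cite{Gra-To} yields $\|T_j\|_{L^{p_1}\times L^{p_2}\rar L^p}\le C(A+\|T_j\|_{L^2\times L^2\rar L^1})\le C_{n,\ep}\|\Om\|_{\lnf}2^{j\ep}$ simultaneously for all $1<p_1,p_2<\nf$. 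To repair your argument you would need to establish some such $j$-subexponential information away from the central point (in the paper's case, the Calder\'on--Zygmund kernel bounds of Lemma \ref{CZPP}); iterating interpolation of the two available norm estimates cannot produce it.
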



To prove Proposition \ref{InB} we use  
Theorem 3 of \cite{Gra-To}
and Proposition \ref{D}. 
To apply the result in \cite{Gra-To} we need to know that the kernel of $T_j$ is 
of bilinear Calder\'on-Zygmund type with bound $A\le C_{n,\ep}\|\Om\|_{\lnf} 2^{j\ep}$ for any $\ep\in (0,1)$.  This is proved in Lemma \ref{CZPP} below. Assuming this lemma, it follows that
$$
\|T_j\|_{L^{p_1}\times L^{p_2}\rar L^{p}}\le 
C (A+\|T_j\|_{L^2\times L^2\rar L^1})\le C_{n,\ep}\|\Om\|_{\lnf} 2^{j\ep},
$$ 
which yields the claim in Proposition \ref{InB}. 

Recall that a bilinear Calder\'on-Zygmund kernel is a function $L $ defined away from the diagonal in $\mathbb R^{3n}$ 
which, for some $A>0$, satisfies the size estimate
$$
| L(u,v,w)| \le \f{A}{(|u-v|+|u-w|+|v-w|)^{2n}}
$$
and the smoothness estimate 
$$
| L(u,v,w)-L(u',v,w)|  \le \f{A |u-u'|^\ep}{(|u-v|+|u-w|+|v-w|)^{2n+\ep}} 
$$
when 
$$
  |u-u'|\le \f13 ( |u-v|+|u-w| )
$$
(with   analogous conditions in $v$ and $w$). 
Such a kernel is associated with  the bilinear operator
$$
(f,g) \mapsto T_L(f,g)(u)=\textup{p.v.}\int_{\mathbb R^n} \int_{\mathbb R^n}  f(v) g(w) L(u,v,w) \, dvdw\, . 
$$
For the theory of such class of operators we refer to \cite{Gra-To}.
Thus we need to prove the following: 

\begin{lm}\label{CZPP}
Given $\Om\in L^{\nf}(\mathbb S^{2n-1})$ and any $j\in \bbz$, for   any $0<\ep<1$ there is a constant $C_{n,\ep}$ such that 
$$
L(u,v,w) =K_j(u-v,u-w)=\sum_{i\in   \mathbb Z} K_{j}^i(u-v,u-w)
$$
 is a    bilinear Calder\'{o}n-Zygmund kernel
with constant $ A\le C_{n,\ep}\|\Om\|_{\lnf} 2^{|j|\ep}$.
\end{lm}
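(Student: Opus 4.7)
The plan is to verify the two pointwise Calder\'on--Zygmund conditions directly. By translation invariance of $L$, they reduce to a size bound and a H\"older estimate for $K_j$ itself as a function on $\mathbb R^{2n}$: writing $\vec x\,=(y,z)$, we want
$$|K_j(\vec x\,)|\le A\,|\vec x\,|^{-2n}, \qquad |K_j(\vec x\,)-K_j(\vec x\,+\vec h\,)|\le A\,|\vec h\,|^\epsilon |\vec x\,|^{-2n-\epsilon}$$
for $|\vec h\,|\le|\vec x\,|/3$, with $A=C_{n,\epsilon}\|\Om\|_{L^\infty}2^{|j|\epsilon}$.

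The starting point is the convolution representation $K_j^i=\phi_{j-i}*K^i$, where $K^i=\beta_iK$ is supported in the annulus $A_i=\{|\vec x\,|\sim 2^i\}$ with $\|K^i\|_{L^\infty}\le C\|\Om\|_{L^\infty}2^{-2ni}$ and $\|K^i\|_{L^1}\le C\|\Om\|_{L^\infty}$, while $\phi_{j-i}(\vec x\,)=2^{2n(j-i)}\phi(2^{j-i}\vec x\,)$ for a fixed Schwartz function $\phi$ (the convolution kernel of $\Delta_0$), which has mean zero. Fix $\vec x\,$ with $|\vec x\,|=R$. The principal contribution to $K_j(\vec x\,)=\sum_iK_j^i(\vec x\,)$ comes from the $O(1)$ indices $i$ with $2^i\approx R$, for which Young's inequality gives $|K_j^i(\vec x\,)|\le\|\phi\|_{L^1}\|K^i\|_{L^\infty}\le C\|\Om\|_{L^\infty}R^{-2n}$. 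For the tails ($2^i$ far from $R$), the Schwartz decay of $\phi_{j-i}$ on the shifted-and-dilated annulus $2^{j-i}(\vec x\,-A_i)$ yields a geometric series in $i$ whose sum is at most a constant multiple of the principal term, giving
$$|K_j(\vec x\,)|\le C\|\Om\|_{L^\infty}R^{-2n}$$
uniformly in $j$.

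For the smoothness, I would next estimate $\nabla K_j^i=(\nabla\phi_{j-i})*K^i$. Since $\|\nabla\phi_{j-i}\|_{L^1}\le C\,2^{j-i}$, the same case analysis at $i\approx\log_2R$ yields
$$|\nabla K_j(\vec x\,)|\le C\|\Om\|_{L^\infty}2^{j_+}R^{-2n-1}\le C\|\Om\|_{L^\infty}2^{|j|}R^{-2n-1}.$$
Interpolating this with the size bound via the mean value theorem, for $|\vec h\,|\le R/3$,
$$|K_j(\vec x\,)-K_j(\vec x\,+\vec h\,)|\le C\|\Om\|_{L^\infty}R^{-2n}\min\bigl\{1,\,2^{|j|}|\vec h\,|/R\bigr\}\le C_\epsilon\|\Om\|_{L^\infty}2^{|j|\epsilon}\frac{|\vec h\,|^\epsilon}{R^{2n+\epsilon}},$$
as required. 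The analogous smoothness in the $v$- and $w$-slots of $L(u,v,w)=K_j(u-v,u-w)$ is immediate.

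The main obstacle is controlling the tail sum $\sum_{2^i\not\approx R}K_j^i(\vec x\,)$ uniformly in $j$. When $j<0$ the smoothing scale $2^{i-j}$ of $\phi_{j-i}$ exceeds $1$, so the Schwartz decay of $\phi_{j-i}$ is only felt outside a ball of radius $\gtrsim 2^{i-j}$; for the $\sim|j|$ intermediate indices $i$ in the transition regime, one must fall back on the complementary estimate $\|\phi_{j-i}\|_{L^1}\le\|\phi\|_{L^1}$ instead of pointwise decay. Because the two bounds match up to a constant there, the series still telescopes to a uniformly bounded quantity, and the $2^{|j|\epsilon}$ loss in the statement of the lemma only enters through the H\"older interpolation step above.
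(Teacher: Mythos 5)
Your overall strategy---prove a size bound and a gradient bound for $K_j$ on $\mathbb R^{2n}$ and get the H\"older estimate with constant $2^{|j|\ep}$ by interpolating the two---is sound, and for $j\ge 0$ it is essentially the paper's own argument in different packaging: the paper proves the per-piece estimate $|K_j^i(x-y)-K_j^i(x)|\le C\|\Om\|_{\lnf}\min(1,|y|/2^{i-j})\,2^{i\ep}2^{-\min(j,0)\ep}|x|^{-2n-\ep}$ (a size/mean-value dichotomy for each $i$) and then sums over $i$ with a threshold at $2^{i-j}\approx |y|$, which is the same interpolation carried out before the summation instead of after. One cosmetic point: in the reduction of the $u$-regularity of $L(u,v,w)=K_j(u-v,u-w)$ you get $\vec h=(u'-u,u'-u)$ with $|\vec h\,|=\sqrt2\,|u-u'|\le \tfrac{\sqrt2}{3}(|u-v|+|u-w|)\le\tfrac23|\vec x\,|$, so you must state and prove your H\"older estimate for $|\vec h\,|\le c|\vec x\,|$ with some $c<1$ larger than $1/3$ (the paper uses $|y|\le\tfrac23|x|$); the mean-value argument is insensitive to this constant.

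The genuine problem is your treatment of the transition regime for $j<0$, i.e.\ the $\sim|j|$ indices $i$ with $2^i\lesssim R\lesssim 2^{i-j}$. The fallback you propose, $\|\phi_{j-i}\|_{L^1}\le\|\phi\|_{L^1}$, must be paired in Young's inequality with $\|K^i\|_{L^\infty}\approx\|\Om\|_{\lnf}2^{-2ni}$; this bound is independent of $R$, and summing it over the transition indices $R2^{-|j|}\lesssim 2^i\lesssim R$ is dominated by the smallest $i$ and yields about $\|\Om\|_{\lnf}2^{2n|j|}R^{-2n}$, not $C R^{-2n}$. So the assertion that ``the two bounds match up to a constant there'' is false, and as written your size (and likewise gradient) estimates lose a factor $2^{2n|j|}$ when $j<0$, which is incompatible with the required $A\le C_{n,\ep}\|\Om\|_{\lnf}2^{|j|\ep}$ (the lemma is stated for all $j\in\mathbb Z$). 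The fix is the opposite pairing: in that regime (and for $2^i\gg R$ when $j<0$) use $\|K^i\|_{L^1}\|\phi_{j-i}\|_{L^\infty}\le C\|\Om\|_{\lnf}2^{2n(j-i)}\le C\|\Om\|_{\lnf}R^{-2n}$, and $\|K^i\|_{L^1}\|\nabla\phi_{j-i}\|_{L^\infty}\le C\|\Om\|_{\lnf}2^{(2n+1)(j-i)}\le C\|\Om\|_{\lnf}R^{-2n-1}$ for the gradient; these decay geometrically as $i$ increases and sum to the desired uniform bounds. This is precisely what the paper's convolution estimate of $|K^i|$ with $\Psi_{i-j}$ encodes through the factor $2^{\min(-i,-2(i-j))n}$. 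With that correction, your argument goes through and yields the stated constant.
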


\begin{proof}
We begin by showing that for
given $x,y\in \mathbb R^{2n}$ with  $|x|\ge 3|y|/2$ 
we have
\begin{equation}\label{smth}
\sum_{i\in \mathbb Z}|K_{j}^i(x-y)-K_{j}^i(x)|\le C_{n,\ep}\|\Om\|_{\lnf} \f{2^{|j|\ep}|y|^{\ep}}{|x|^{2n+\ep}}
\end{equation} 

Assuming  \eqref{smth}, we deduce the smoothness of $K_j(u-v,u-w)$ as follows: 
 
\begin{enumerate}

\item[(a)]  For 
 $u,\ v,\ v',\ w\in \bbr^n$ satisfying $|v-v'|\le\tf13(|u-v|+|u-w|)$ we obtain 
\begin{align*}
|K_j(u-v,u-w)&-K_j(u-v',u-w)| \\
\le&\,\, \sum_{i\in \mathbb Z}|K_j^i(u-v',u-w)-K_j^i(u-v,u-w)|\\
\le&\,\,C_{n,\ep}\|\Om\|_{\lnf}\f{2^{|j|\ep}|v-v'|^{\ep}}{(|u-v|+|u-w|)^{2n+\ep}}\\
\le&\,\,C_{n,\ep}\|\Om\|_{\lnf}\f{2^{|j|\ep}|v-v'|^{\ep}}{(|u-v|+|u-w|+|v-w|)^{2n+\ep}}
\end{align*}
since $|u-v|+|u-w|+|v-w|\le 2(|u-v|+|u-w|)$. 

\item[(b)] 
  For $u,\ u',\ v,\ w\in \bbr^n$ satisfying $|u-u'|\le\tf13(|u-v|+|u-w|)$ we take 
$x=(u-v,u-w)$ and $y=(u-u',u-u')$ in  
 \eqref{smth} to deduce the claimed smoothness.  
 
 \item[(c)] 
 For $u, \ v,\ w, \ w'\in \bbr^n$ satisfying $|w-w'|\le\tf13(|u-v|+|u-w|)$ we take
 $x=(u-v,u-w)$ and $y=(0,w'-w)$. 
 
 \end{enumerate}


We may therefore focus on  \eqref{smth}. This will be a consequence of the following estimate
\begin{equation}\label{diff}
|K_j^i(x-y)-K_j^i(x)|\le 
C_{n,\ep}\|\Om\|_{\lnf}\min \Big(1,\f{|y|}{2^{i-j}}\Big)\f{1}{2^{-i\ep}2^{\min(j,0)\ep}|x|^{2n+\ep}}
\end{equation}
when $|x|\ge 3|y|/2$. Assuming \eqref{diff}   we   prove \eqref{smth} as follows:
We pick an integer $N_3$   such that 
  $(\log_2|y|)+j\le N_3<(\log_2|y|)+j+1$. 

If $j\ge 0$, then for $i$ such that $2^{i-j}\le|y|$, i.e., $i\le N_3$,  we have
\begin{align*}
\sum_{i\le N_3}|K_j^i(x-y)-K_j^i(x)|
\le\,\,&\,\,C_{n,\ep}2^{2(2n+\ep)}\|\Om\|_{\lnf}\sum_{i\le N_3}\f{1}{2^{-i\ep}|x|^{2n+\ep}}\\
=\,\,&\,\,C_{n,\ep}2^{2(2n+\ep)}\|\Om\|_{\lnf}|x|^{-2n-\ep}(2^j|y|)^{\ep}\\
=\,\,&\,\,C_{n,\ep}2^{2(2n+\ep)}\|\Om\|_{\lnf}\f{2^{j\ep}|y|^{\ep}}{|x|^{2n+\ep}}\\
=\,\,&\,\,C_{n,\ep}2^{2(2n+\ep)}\|\Om\|_{\lnf}\f{2^{|j|\ep}|y|^{\ep}}{|x|^{2n+\ep}}. 
\end{align*}

For $j\ge0$ and $i>N_3$, i.e. $2^{i-j}>|y|$,
\begin{align*}
\sum_{i>N_3}|K_j^i(x-y)-K_j^i(x)|
\le\,\,&\,\,C_{n,\ep}2^{2(2n+\ep)}\|\Om\|_{\lnf}\sum_{i>N_3}\f{|y|}{2^{i-j}}\f{1}{2^{-i\ep}|x|^{2n+\ep}}\\
\le\,\,&\,\,C_{n,\ep}2^{2(2n+\ep)}\|\Om\|_{\lnf}|y||x|^{-2n-\ep}2^{j}\sum_{i>N_3}2^{i(\ep-1)}\\
=\,\,&\,\,C_{n,\ep}2^{2(2n+\ep)}\|\Om\|_{\lnf}|y||x|^{-2n-\ep}2^{j}(2^j|y|)^{\ep-1}\\
=\,\,&\,\,C_{n,\ep}2^{2(2n+\ep)}\|\Om\|_{\lnf}\f{2^{|j|\ep}|y|^{\ep}}{|x|^{2n+\ep}} .
\end{align*}

If $j<0$, then for $i\le N_3$,
\begin{align*}
\sum_{i\le N_3}|K_j^i(x-y)-K_j^i(x)|
\le\,\,&\,\,C_{n,\ep}2^{2(2n+\ep)}\|\Om\|_{\lnf}\sum_{i\le N_3}\f1{2^{-i\ep}2^{j\ep}|x|^{2n+\ep}}\\
=\,\,&\,\,C_{n,\ep}2^{2(2n+\ep)}\|\Om\|_{\lnf}\f{2^{-j\ep}}{|x|^{2n+\ep}}\sum_{i\le N_3}2^{i\ep}\\
\le\,\,&\,\,C_{n,\ep}2^{2(2n+\ep)}\|\Om\|_{\lnf}\f{2^{-j\ep}}{|x|^{2n+\ep}}(2^j|y|)^{\ep}\\
=\,\,&\,\,C_{n,\ep}2^{2(2n+\ep)}\|\Om\|_{\lnf}\f{|y|^{\ep}}{|x|^{2n+\ep}}.
\end{align*}

If $j<0$ and $i>N_3$, then
\begin{align*}
\sum_{i>N_3}|K_j^i(x-y)-K_j^i(x)|
\le\,\,&\,\,C_{n,\ep}2^{2(2n+\ep)}\|\Om\|_{\lnf}\sum_{i>N_3}\f{|y|}{2^{i-j}}\f{1}{2^{-i\ep}2^{j\ep}|x|^{2n+\ep}}\\
\le\,\,&\,\,C_{n,\ep}2^{2(2n+\ep)}\|\Om\|_{\lnf}|y||x|^{-n-\ep}2^{j(1-\ep)}\sum_{i>N_3}2^{i(\ep-1)}\\
=\,\,&\,\,C_{n,\ep}2^{2(2n+\ep)}\|\Om\|_{\lnf}|y||x|^{-n-\ep}2^{j(1-\ep)}(2^j|y|)^{\ep-1}\\
=\,\,&\,\,C_{n,\ep}2^{2(2n+\ep)}\|\Om\|_{\lnf}\f{|y|^{\ep}}{|x|^{2n+\ep}}.
\end{align*}
And for $j<0$
$$
\f{|y|^{\ep}}{|x|^{2n+\ep}}\le\f{2^{|j|\ep}|y|^{\ep}}{|x|^{2n+\ep}}.
$$

 This concludes the proof of  \eqref{smth} assuming \eqref{diff}.    
Finally we   prove \eqref{diff}.

We have a decreasing estimate of $K^i(x)$, i.e. for $\ep\in(0,1)$ and $i\in \mathbb Z$
\begin{align}
|K^i(x)|\le\,\,& \,\,\|\Om\|_{\lnf}2^{-2in}\chi_{\f12\le\f{|x|}{2^i}\le2}(x)    \notag \\
\le\,\,& \,\,\|\Om\|_{\lnf}2^{2n+\ep}\f{2^{-2in}}{(1+2^{-i}|x|)^{2n+\ep}}\chi_{\f12\le\f{|x|}{2^i}\le2}(x)  \notag \\
\le\,\,&\,\,2^{2n+\ep}\|\Om\|_{\lnf}\f{2^{-2in}}{(1+2^{-i}|x|)^{2n+\ep}}.\label{Dec}
\end{align}

Then recall the lemma from Appendix B1 of \cite{Gra14m}, by defining $\Psi(x)=\f{1}{(1+|x|)^{2n+1}}$
we have that for $t\in[0,1]$
\begin{align*}
 (|K^i| & *\Psi_{i-j})(x-ty)\\
\le\,\,&\,\,{ 2^{2n+\ep}\|\Om\|_{\lnf}\intrn\f{2^{-2in}}{(1+2^{-i}|z|)^{2n+\ep}}\f{2^{-2(i-j)n}}{(1+2^{-(i-j)}|x-ty-z|)^{2n+1}}dz}\\
\le\,\,&\,\,C_{n,\ep}2^{2n+\ep}\|\Om\|_{\lnf}\f{2^{\min(-i,-2(i-j))n}}{(1+2^{\min(-i,-(i-j))}|x-ty|)^{2n+\ep}}\\
\le\,\,&{ \,\,C_{n,\ep}2^{2(2n+\ep)}\|\Om\|_{\lnf}\f{2^{-2in}2^{2\min(j,0)n}}{(2^{-i}2^{\min(j,0)}|x|)^{2n+\ep}} }\\
\le\,\,&\,\,C_{n,\ep}2^{2(2n+\ep)}\|\Om\|_{\lnf}\f{2^{i\ep}}{2^{\min(j,0)\ep}|x|^{2n+\ep}},
\end{align*}
which gives the first part of \eqref{diff} by 
taking $t=0$ and $1$ since 
$$
|\mathcal F^{-1}(\be_j)(x)|\le C_{\be}2^{2jn}(1+2^{j}|x|)^{-2n-1}= C_{\be}\Psi_j(x).
$$

The other part follows from the previous estimate in the following way
\begin{align*}
|K_j^i(x-y)& -K_j^i(x)| \\
=\,\,&\,\,{  \bigg|\int_{\bbr^{2n}} K^i(z)(\mathcal F^{-1}(\be_{i-j})(x-y-z)-\mathcal F^{-1}(\be_{i-j})(x-z))dz\bigg|}\\
= \,\,&\,\,\bigg| \int_{\bbr^{2n}} K^i(z)\int_0^12^{-2(i-j)n}2^{-(i-j)}(\nabla(\mathcal F^{-1}\be))(\f{x-ty-z}{2^{i-j}})\cdot y\, dtdz\bigg|\\
\le\,\,&\,\,{ \f{C_{n,\ep}|y|}{2^{i-j}}\int_0^1\int_{\bbr^{2n}}|K^i(z)|\f{2^{-(i-j)n}}{(1+2^{j-i}|x-ty-z|)^{2n+1}}dzdt }\\
\le\,\,&\,\,C_{n,\ep}\f{|y|}{2^{i-j}}\int_0^1(|K^i|*\Psi_{i-j})(x-ty)dt\\
\le\,\,&\,\,C_{n,\ep}2^{2(2n+\ep)}\|\Om\|_{\lnf}\f{|y|}{2^{i-j}}
\f{C_{n,\ep}}{2^{-i\ep}2^{\min(j,0)\ep}|x|^{2n+\ep}}.   
\end{align*}

To prove the size condition, notice that by the decreasing estimate \eqref{Dec} 
we have
\begin{align*}
\sum_{i \in \mathbb Z } |K_{j}^i(v,w)|
\le\,\,&{ \,\,\sum_i|\int K^i(v_1,w_1)\be_{i-j}(v-v_1,w-w_1)\, dv_1dw_1| }\\
\le\,\,&{ \,\,\sum_i C_{n,\ep}\f{2^{-2in}}{(1+c_k2^{-i}|(v,w)|)^{2n+\ep}} }\\
\le\,\,&\,\,C_{n,\ep}\sum_{i>N^*}2^{-2in}+C(c_j|(v,w)|)^{-(2n+\ep)}\sum_{i\le N^*}2^{j\ep}\\
\le\,\,&\,\,C_{n,\ep}|(v,w)|^{-2n}
\end{align*}
where $c_j=2^{\min(0,j)}$ and $N^*$ is the number such that $2^{N^*}\approx c_j|(v,w)|$.
Hence
$$
|K_j(u-v,u-w)|\le  \f{ C_{n,\ep} }{ (|u-v|+|u-w|)^{ 2n}}  \le \f{ C_{n,\ep}}{ (|u-v|+|u-w|+|v-w|)^{ 2n}}\, . 
$$
\end{proof}

We improve Proposition \ref{InB} by giving a necessary decay 
 via interpolation. Once this is proved, 
Theorem \ref{Main} follows trivially.
\begin{lm}
Let $\Om\in L^{\nf}(\mathbb S^{2n-1})$,  
$1<p_1,p_2<\nf$ and $1/p=1/p_1+1/p_2$,
then  there exist constants
$\ep_0>0$ and $C_{n,\ep_0}$ 
such that  for all $j\ge 0$ 
we have
$$
\|T_j\|_{L^{p_1}\times L^{p_2}\rar L^{p}}\le C_{n,\ep_0}\|\Om\|_{\lnf} 2^{-j\ep_0}.
$$
\end{lm}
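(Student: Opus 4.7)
The plan is to combine the exponential decay at the vertex $(1/2,1/2,1)$ supplied by Proposition \ref{D} with the slowly growing bound at a nearby auxiliary point from Proposition \ref{InB}, and then apply bilinear complex interpolation to produce genuine exponential decay at the target triple.

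First, given a target $(p_1,p_2,p)$ with $1<p_1,p_2<\nf$ and $1/p=1/p_1+1/p_2$, I would pick a small parameter $\eta>0$ and introduce auxiliary exponents $q_1,q_2$ via
$$
\frac{1}{q_i}=\frac{1}{2}+(1+\eta)\Big(\frac{1}{p_i}-\frac{1}{2}\Big),\qquad i=1,2,
$$
together with $1/q=1/q_1+1/q_2$. For $\eta$ small enough (depending only on $p_1,p_2$), we still have $1<q_1,q_2<\nf$. Setting $\theta=1/(1+\eta)\in(0,1)$, a direct computation gives
$$
\frac{1}{p_i}=\frac{1-\theta}{2}+\frac{\theta}{q_i},\qquad \frac{1}{p}=(1-\theta)+\frac{\theta}{q},
$$
so $(1/p_1,1/p_2,1/p)$ is the convex combination with weight $\theta$ of the endpoints $(1/2,1/2,1)$ and $(1/q_1,1/q_2,1/q)$.

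Next I would invoke Proposition \ref{D} (with $q=\nf$) to fix some $\de>0$ with
$$
\|T_j\|_{L^2\times L^2\rar L^1}\le C\|\Om\|_{L^\nf}\,2^{-\de j},
$$
and Proposition \ref{InB}, applied with a parameter $\ep\in(0,1)$ to be chosen, to obtain
$$
\|T_j\|_{L^{q_1}\times L^{q_2}\rar L^q}\le C_{n,\ep}\|\Om\|_{L^\nf}\,2^{\ep j}.
$$
Bilinear complex interpolation between these two bounds then yields
$$
\|T_j\|_{L^{p_1}\times L^{p_2}\rar L^p}\le C'_{n,\ep}\|\Om\|_{L^\nf}\,2^{\,j(-\de(1-\theta)+\theta\ep)}.
$$
Choosing $\ep=\de(1-\theta)/(2\theta)$ makes the exponent equal to $-\de(1-\theta)/2$, so setting $\ep_0=\de(1-\theta)/2>0$ gives the desired decay. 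Since $\theta$ depends only on $p_1,p_2$, so do $\ep_0$ and the implicit constant.

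The one technical point that requires care is that the target space $L^p$ may have $p<1$, so the interpolation step takes place in the quasi-Banach range. This is handled by the bilinear Riesz--Thorin-type theorem available in that range (as in \cite{Gra-To} and Chapter 7 of \cite{Gra14m}); both endpoint tuples are compatible with the scaling relation $1/p=1/p_1+1/p_2$, and the operators $T_j$ are a priori well-defined bilinear operators on Schwartz functions, so the standard analytic family argument produces the interpolated bound with the two endpoint norms raised to the conjugate powers. With that interpolation theorem in hand, the above computation closes the proof and, combined with Proposition \ref{Good} for $j<0$, yields Theorem \ref{Main} by summing over $j\in\mathbb Z$.
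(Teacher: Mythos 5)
Your argument is correct and is essentially the paper's own proof: both interpolate between the decaying $L^2\times L^2\to L^1$ bound of Proposition \ref{D} and the $2^{j\ep}$-growth bounds of Proposition \ref{InB} at nearby points, with $\ep$ chosen small enough that the interpolated exponent stays negative. The only cosmetic difference is that you interpolate along a segment through a single auxiliary point, whereas the paper puts the target triple in the convex hull of $(\tfrac12,\tfrac12,1)$ and two non-collinear auxiliary points and cites the multilinear interpolation theorem (Theorem 7.2.2 in \cite{Gra14m}), which is also the tool that legitimizes the quasi-Banach-range step you flag at the end.
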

\begin{proof}
For any triple $(\tf1{p_1},\tf1{p_2},\tf1p)$
with $1/p=1/p_1+1/p_2$, 
we can choose two triples 
$\vec P_1=(\tf1{p_{1,1}},\tf1{p_{1,2}},\tf1{q_1})$ and 
$\vec P_2=(\tf1{p_{2,1}},\tf1{p_{2,2}},\tf1{q_2})$
such that $\vec P_1$, $\vec P_2$ and $(\tf12,\tf12,1)$ are not collinear and
the point $(\tf1{p_1},\tf1{p_2},\tf1p)$ is in the convex hull of them.
By Proposition \ref{InB} and Proposition 
\ref{D},
$T_j$ is bounded at $\vec P_1$, $\vec P_2$ with bound
$C_{n,\ep}\|\Om\|_{\lnf}2^{j\ep}$ for any $\ep\in(0,1)$
 and at $(\tf12,\tf12,1)$ with bound $C_{n}\|\Om\|_{\lnf} 2^{-j\de} $ for some fixed $\de<1/8$.
Applying Theorem 7.2.2 in \cite{Gra14m}  
we obtain that 
$$
\|T_j(f,g)\|_{L^p}\le C_{n,\ep_0}\|\Om\|_{\lnf}2^{-j\ep_0}\|f\|_{L^{p_1}}\|g\|_{L^{p_2}} 
$$
for some constant $\ep_0$ depending on $p_1,p_2,p$.
\end{proof}

As an application of Theorem \ref{Main} we derive  the boundedness of the Calder{\'o}n 
commutator in the full range of exponents $1<p_1,p_2<\nf$, a fact proved in \cite{C75}. The Calder{\'o}n commutator is defined 
in \cite{C65} as 
$$
\mathcal C(a,f)(x)=p.v.\int_{\bbr}\f{A(x)-A(y)}{(x-y)^2}f(y)dy,
$$
where $a$ is the derivative of $A$.
It is a well known fact \cite{CM2} that this operator
can be written as 
$$
p.v.\int_{\bbr}\int_{\bbr}K(x-y,x-z)f(y)a(z)dydz
$$
with $K(y,z)=\tf{e(z)-e(z-y)}{y^2}=\tf{\Om((y,z)/|(y,z)|)}{|(y,z)|^2}$,
where $e(t)=1$ if $t>0$ and $e(t)=0$ if $t<0$.
$K(y,z)$ is odd and homogeneous of degree $-2$ whose restriction
on $\mathbb S^1$ is $\Om(y,z)$.
It is easy to check that $\Om$ is odd, bounded and thus it satisfies the hypothesis of
Theorem \ref{Main}.

\begin{cor}  
Given   $1<p_1,p_2<\nf$ with $1/p=1/p_1+1/p_2$ there is a constant $C$ such that 
$$
\|\mathcal C(a,f)\|_{\lp}\le C\|a\|_{L^{p_1}}\|f\|_{L^{p_2}}
$$
is valid for all  functions $f$ and $a$ on the line. 
\end{cor}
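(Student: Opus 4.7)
The plan is to identify the Calder\'on commutator as a special instance of the rough bilinear singular integral $T_\Om$ and then invoke Theorem \ref{Main} directly. The excerpt has already done most of the setup: it expresses $\mathcal C(a,f)(x)$ as the principal-value double integral with kernel $K(y,z)=(e(z)-e(z-y))/y^2$ and records that $K$ is odd and homogeneous of degree $-2$, so that $K(y,z)=\Om((y,z)/|(y,z)|)/|(y,z)|^2$ for a function $\Om$ on $\mathbb S^1$. The task therefore reduces to checking that this $\Om$ meets the hypotheses of Theorem \ref{Main}, namely $\Om\in L^\nf(\mathbb S^1)$ with $\int_{\mathbb S^1}\Om\, d\sg=0$.

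First, I would verify that $\Om$ has vanishing mean: since $K(-y,-z)=-K(y,z)$ (a short calculation using $e(-t)=1-e(t)$ off the origin), $\Om$ is odd on $\mathbb S^1$, which forces its integral over $\mathbb S^1$ to vanish. Second, I would show $\Om\in L^\nf(\mathbb S^1)$. Examining cases for the sign of $z$ and $z-y$, one sees that $e(z)-e(z-y)$ is nonzero only when $0<z<y$ or $y<z<0$, and in each of those regimes $|z|\le|y|$, which on the unit circle $y^2+z^2=1$ keeps $|y|$ bounded below away from $0$. Consequently $|\Om(\theta_1,\theta_2)|=|e(\theta_2)-e(\theta_2-\theta_1)|/\theta_1^2$ is uniformly bounded on $\mathbb S^1$.

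With these two facts in hand, I would apply Theorem \ref{Main} (with the roles of the two input functions being $f$ and $a$) to conclude that
\[
\|\mathcal C(a,f)\|_{L^p(\bbr)}=\|T_\Om(f,a)\|_{L^p(\bbr)}\le C\,\|\Om\|_{L^\nf}\,\|f\|_{L^{p_2}(\bbr)}\,\|a\|_{L^{p_1}(\bbr)}
\]
for all $1<p_1,p_2<\nf$ with $1/p=1/p_1+1/p_2$, which is exactly the claim. The only step requiring any genuine care is the $L^\nf$ bound on $\Om$ at the points of $\mathbb S^1$ where $\theta_1$ is small; this is the one place where the cancellation encoded in $e(z)-e(z-y)$ must be used, but the case analysis above handles it cleanly. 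Everything else is a formal identification of kernels and a citation of the main theorem.
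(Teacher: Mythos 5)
Your proof is correct and follows exactly the paper's route: identify $\mathcal C(a,f)$ with $T_\Om(f,a)$ for $\Om(\tht_1,\tht_2)=(e(\tht_2)-e(\tht_2-\tht_1))/\tht_1^2$ restricted to $\mathbb S^1$, verify oddness (hence mean zero) via $e(-t)=1-e(t)$ and boundedness via the observation that the numerator vanishes unless $|\tht_2|<|\tht_1|$, which forces $\tht_1^2>1/2$ on the circle, and then cite Theorem \ref{Main}. The paper leaves these verifications as "easy to check," so you have simply supplied the details it omits; there is no substantive difference in approach.
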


\section{Boundedness  of $T_\Om$ when $\Om \in L^q(\mathbb S^{2n-1})$ with $2\le q<\nf$}
 
Let $\mathcal R$ be the rhombus    of all points 
$(\tf1{p_1},\tf1{p_2},\tf1p)$ with $1\le p_1,p_2\le \nf$ and $1/p=1/p_1+1/p_2$. We let 
  $\mathcal B$ be the set of all points $(\tf1{p_1},\tf1{p_2},\tf1p)$ such that either $p_1$ or $p_2$ are equal to $1$ or $\nf$, 
 i.e. $\mathcal B$ is the boundary of $\mathcal R$.

\begin{theorem}\label{All}  
Given any dimension $n\ge 1$, there is a constant $C_n$ and there exists 
 a neighborhood $\mathcal S$ of the point $(\tf12,\tf12,1)$ in $\mathcal R$, whose size is at least
$C_n (q')^{-2}$, such that if $\Om$ lies in $L^q(\mathbb S^{2n-1})$ with $2\le q\le\nf$, then 
$$
\|T_\Om\|_{L^{p_1}\times L^{p_2}\rar L^p}<\nf
$$
for $(\tf1{p_1},\tf1{p_2},\tf1p)\in \mathcal S$.
\end{theorem}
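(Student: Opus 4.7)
The plan is to combine the central $L^2\times L^2\to L^1$ decay for the dyadic pieces $T_j$ from Proposition~\ref{D} with the (exponentially growing) off-center bounds from Proposition~\ref{Good}, via multilinear complex interpolation, and then sum the resulting geometric series in $j\in\mathbb Z$. This is the same strategy used at the end of the $q=\infty$ case, but now we must pay careful attention to how the permissible decay $\delta\sim 1/q'$ feeds into both the geometric rate and the choice of auxiliary interpolation points.

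First I fix $2\le q<\infty$ and pick $\delta$ slightly below $1/(8q')$, so that Proposition~\ref{D} supplies
$$
\|T_j\|_{L^2\times L^2\to L^1}\le C\|\Omega\|_{L^q}\,2^{-\delta j},\qquad j\ge 0,
$$
while Proposition~\ref{Good} gives, uniformly on compact subsets of $\mathcal R\setminus\mathcal B$,
$$
\|T_j\|_{L^{p_1}\times L^{p_2}\to L^p}\le C(\vec Q)\,\|\Omega\|_{L^q}\,2^{(2n-\delta)j},\qquad j\ge 0,
$$
together with matching geometric decay for $j<0$ in both estimates.

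Next I choose three auxiliary triples $\vec P_1,\vec P_2,\vec P_3$ in the interior of $\mathcal R$, placed symmetrically around $(1/2,1/2,1)$ at a controlled distance $d=d(q)$, and arranged so that $(1/2,1/2,1)$ is interior to the triangle they span. A target triple $\vec P$ near the center then admits a convex representation
$$
\vec P=\theta_0(1/2,1/2,1)+\theta_1\vec P_1+\theta_2\vec P_2+\theta_3\vec P_3,\qquad \theta_0+\theta_1+\theta_2+\theta_3=1,
$$
with $1-\theta_0$ comparable to $d^{-1}|\vec P-(1/2,1/2,1)|$. Applying multilinear complex interpolation (Theorem 7.2.2 of \cite{Gra14m}, as in the last step of the $q=\infty$ argument) to the four bounds above yields
$$
\|T_j\|_{L^{p_1}\times L^{p_2}\to L^p}\le C\|\Omega\|_{L^q}\,2^{\bigl[-\delta\theta_0+(2n-\delta)(1-\theta_0)\bigr]j}.
$$
This exponent is negative precisely when $1-\theta_0<\delta/(2n)$, i.e.\ when $|\vec P-(1/2,1/2,1)|\lesssim_n \delta\, d$. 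Taking $d$ of order $(q')^{-1}$, which is the largest scale at which the auxiliary vertices stay uniformly inside $\mathcal R\setminus\mathcal B$ with interpolation constants of the desired order, and recalling $\delta\sim(q')^{-1}$, gives a neighborhood of radius $c_n(q')^{-2}$ on which $\|T_j\|$ decays geometrically in $|j|$. Summing over $j\in\mathbb Z$ then produces $\|T_\Omega\|_{L^{p_1}\times L^{p_2}\to L^p}<\infty$ throughout that neighborhood.

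The main obstacle is tracking the dependence on $q$: the growth rate $2n-\delta$ is essentially forced by Proposition~\ref{Good}, so the only way to enlarge the decay region past the naive $(q')^{-1}$ scaling is to place the auxiliary vertices at the optimal distance $d\sim(q')^{-1}$ from the center, which in turn requires the interpolation constants $C(\vec P_i)$ to remain under control at that scale. Balancing these two effects is what converts the interpolation identity into the claimed $(q')^{-2}$ neighborhood and is the only delicate bookkeeping in the argument.
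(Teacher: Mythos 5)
Your overall strategy is the same as the paper's: interpolate the $L^2\times L^2\to L^1$ decay from Proposition~\ref{D} against the off-center bounds from Proposition~\ref{Good} using Theorem~7.2.2 of \cite{Gra14m}, then sum the geometric series in $j$. The paper does this with two fixed auxiliary interior points forming a triangle with $(\tfrac12,\tfrac12,1)$; you use three, which is harmless.

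The bookkeeping that produces the size of the neighborhood, however, contains an error. You claim that $d\sim(q')^{-1}$ is ``the largest scale at which the auxiliary vertices stay uniformly inside $\mathcal R\setminus\mathcal B$ with interpolation constants of the desired order.'' This is not so: $\mathcal R$ is a fixed compact rhombus and the distance from $(\tfrac12,\tfrac12,1)$ to $\mathcal B$ is a $q$-independent constant, so one may (and the paper does) place the auxiliary vertices at a fixed distance $d_0\sim 1$ from the center; the constants from Proposition~\ref{Good} and the interpolation theorem at such fixed points depend only on $n$ and $d_0$, not on $q$. With $d_0\sim 1$, your own estimate $|\vec P-(\tfrac12,\tfrac12,1)|\lesssim_n\delta\,d$ immediately gives a neighborhood of \emph{radius} $\sim\delta\sim(q')^{-1}$, hence of two-dimensional \emph{size} $\sim(q')^{-2}$, which is exactly what the theorem asserts and what the sharpness remark ($\tfrac1p-1\approx\tfrac1{q'}$) confirms. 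By shrinking $d$ to $(q')^{-1}$ you instead obtain radius $\sim(q')^{-2}$ (area $\sim(q')^{-4}$), a strictly smaller region than claimed; moreover the stated motivation for doing so --- enlarging the decay region --- is backwards, since making $d$ smaller makes the neighborhood smaller. Replace $d\sim(q')^{-1}$ by a fixed $d_0$ and the argument closes correctly.
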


\begin{proof} 
In Proposition \ref{D} we showed that $\|T_{j}\|_{L^2\times L^2\rar L^1}\le C\|\Om\|_{L^q}2^{-j\de}$
with $\de\approx\tf1{q'}$. Consider the point $(\tf12,\tf12,1)$. Find two other points
 $(\tf1{p_{11}},\tf1{p_{12}},\tf1{q_1})$ and $(\tf1{p_{21}},\tf1{p_{22}},\tf1{q_2})$ in the interior of $\mathcal R$ such that 
 these three points are not colinear.

Then if      $(\tf1{p_1},\tf1{p_2},\tf1p)$   lies in the open convex hull of these three points, precisely, if 
$\tf1{p_i}=\tf1{p_{1i}}\eta_1+\tf1{p_{2i}}\eta_2+\tf12\eta_3$ for $i=1,2$,  and 
$\eta_1+\eta_2+\eta_3=1$, then 
 multilinear interpolation (Theorem 7.2.2 in \cite{Gra14m})  yields that 
$$
\|T_{j}\|_{L^{p_1}\times L^{p_2}\rar L^p}
\le C\|\Om\|_{L^q} 2^{j(2n(\eta_1+\eta_2)-\de\eta_3)}.
$$
 Moreover, if  
 $2n(\eta_1+\eta_2)-\de\eta_3<0$, then 
 $  \sum_{j\ge 0}   \|T_{j}\|_{L^{p_1}\times L^{p_2}\rar L^p} \le C\|\Om\|_{L^q}$. 

If $(\tf1{p_{11}},\tf1{p_{12}},\tf1{q_1})$ and $(\tf1{p_{21}},\tf1{p_{22}},\tf1{q_2})$ are 
close and let $\eta_1=\eta_2=\eta$, we roughly have 
$4n\eta-\de(1-2\eta)<0$, from which we get 
$\eta<\f{\de}{4n+2\de}$.
In particular, all points $\vec P=(\tf1{p_1},\tf1{p_2},\tf1p)$ in the set 
$$
\Big\{\vec P=(1-t)(\tf12,\tf12,1)+t\vec B:\,\, 0\le t\le \de/16n,\ \vec B \in \mathcal B\Big\}
$$
are contained in the claimed neighborhood, whose size is comparable to
 $(q')^{-2}$. 
\end{proof}

\begin{rmk}
Theorem \ref{All} is sharp in the following sense. Let $\vec A=(\tf12,\tf12,1)$
and $\vec B_0=(1,1,2)$. By Theorem \ref{All}, the smallest $p $ such that $(\tf1{p_1},\tf1{p_2},\tf1p)$ lies in $\mathcal S$    
 satisfies
$$
\f1{p }=2\cdot \f{2\de}{16n}+(1-\f{2\de}{16n})=1+\f{\de}{8n},
$$
from which $\tf{1}{p }-1=\tf{\de}{8n}\approx \tf1{q'}$.
For the case $n=1$, by the example in \cite{DGHST}, we have the requirement
$\tf1p+\tf1q\le2$, which implies that $\tf1p-1\le \tf1{q'}$.
\end{rmk}

We end this paper by stating two related open problems:

\begin{itemize}
\item[(a)] Given $\Om \in L^q(\mathbb S^{2n-1})$ with $2\le q<\nf$, find the full range of $p_1,p_2,p$ such that $T_\Om$ maps
$L^{p_1}\times L^{p_2}\to L^p$. 

\item[(b)] Is $T_\Om$ bounded when $\Om \in L^q(\mathbb S^{2n-1})$ for $q<2$?
\end{itemize}

\end{document}